\newtheorem{theorem}{Theorem}
\newtheorem{corollary}[theorem]{Corollary}
\newtheorem{proposition}[theorem]{Proposition}
\newtheorem{lemma}[theorem]{Lemma}
\theoremstyle{definition}
\newtheorem{definition}[theorem]{Definition}
\theoremstyle{remark}
\newtheorem{remark}[theorem]{Remark}
\newcommand{\ud}{\mathrm{d}}
\newcommand{\pd}{\partial}
\newcommand{\e}{\varepsilon}
\newcommand{\bC}{\mathbb{C}}
\newcommand{\bN}{\mathbb{N}}
\newcommand{\bR}{\mathbb{R}}
\newcommand{\cA}{\mathcal{A}}
\newcommand{\ccD}{\mathcal{D}}
\newcommand{\cE}{\mathcal{E}}
\newcommand{\cG}{\mathcal{G}}
\newcommand{\cN}{\mathcal{N}}
\newcommand{\cT}{\mathcal{T}}
\newcommand{\csub}{\subset \subset}
\newcommand{\coleq}{\mathrel{\mathop:}=}
\newcommand{\fX}{\mathfrak{X}}
\DeclareMathOperator{\Fl}{Fl}
\DeclareMathOperator{\TO}{TO}
\DeclareMathOperator{\id}{id}
\DeclareMathOperator{\supp}{supp}
\DeclareMathOperator{\dist}{dist}
\DeclareMathOperator{\pr}{pr}
\newcommand{\hers}{\hat\cE^r_s}
\newcommand{\hersm}{(\hat\cE^r_s)_m}
\newcommand{\hnrs}{\hat\cN^r_s}
\newcommand{\hgrs}{\hat\cG^r_s}
\newcommand{\hersmM}{(\hat\cE^r_s)_m(M)}
\newcommand{\hersM}{\hat\cE^r_s(M)}
\newcommand{\hgM}{{\hat\cG(M)}}
\newcommand{\heM}{{\hat\cE(M)}}
\newcommand{\hemM}{\hat\cE_m(M)}
\newcommand{\GhrsM}{\hat\cG^r_s(M)}
\newcommand{\NhrsM}{\hat\cN^r_s(M)}
\newcommand{\GhM}{\hat\cG(M)}
\newcommand{\Gd}{\cG^d}
\newcommand{\cTrsM}{\cT^r_s(M)}
\newcommand{\cTrsN}{\cT^r_s(N)}
\newcommand{\cTsrM}{{\ensuremath{\cT^s_r(M)}}}
\newcommand{\DprsM}{\ccD'^r_s(M)}
\newcommand{\DprsN}{\ccD'^r_s(N)}
\newcommand{\DprsU}{\ccD'^r_s(U)}
\newcommand{\cTrsU}{\mathcal{T}^r_s(U)}
\newcommand{\cTsrU}{\mathcal{T}^s_r(U)}
\newcommand{\ocM}{{\Omega^n_c(M)}}
\newcommand{\ocN}{{\Omega^n_c(N)}}
\newcommand{\oc}[1]{{\Omega^n_c(#1)}}
\newcommand{\DpM}{\ccD'(M)}
\newcommand{\DpU}{\ccD'(U)}
\newcommand{\TrsM}{\mathrm{T}^r_s(M)}
\newcommand{\TsrM}{\mathrm{T}^s_r(M)}
\newcommand{\tang}{\mathrm{T}}
\newcommand{\Cinf}{{C^\infty}}
\newcommand{\LX}{\Lie_X}
\newcommand{\Lie}{\mathrm{L}}
\newcommand{\Liep}{\Lie'}
\newcommand{\Lin}{\mathrm{L}}
\newcommand{\iors}{{\iota^r_s}}
\newcommand{\sirs}{\sigma^r_s}
\DeclareMathOperator{\sign}{sign}
\newcommand{\TM}{{\tang M}}
\providecommand{\norm}[1]{\left\lVert#1\right\rVert}
\providecommand{\abso}[1]{\left\lvert#1\right\rvert}
\newcommand{\CdCn}[1]{\mathcal{N}^C(#1)}
\newcommand{\CinfM}{{C^\infty(M)}}
\newcommand{\sD}{\mathscr{D}}
\newcommand{\trans}{\mathrm{T}}
\newcommand{\scale}{\mathrm{S}}
\newcommand{\flowdom}{\sD}
\newcommand{\sk}[2]{\widetilde\cA_{#1}(#2)}
\newcommand{\lsk}[2]{\widetilde\cA_{#1}(#2)}
\newcommand{\ub}[2]{\hat\cA_{#1}(#2)}
\newcommand{\lub}[2]{\cA_{#1}(#2)}
\title{Nonlinear tensor distributions on Riemannian manifolds}
\author{Eduard Nigsch\footnote{University of Vienna, Faculty of Mathematics, Nordbergstr.~15, A-1090 Vienna, Austria, eduard.nigsch@univie.ac.at}}
\begin{document}
\maketitle
\begin{abstract}
We construct an algebra of nonlinear generalized tensor fields on manifolds in the sense of J.-F.~Colombeau, i.e., containing distributional tensor fields as a linear subspace and smooth tensor fields as a faithful subalgebra. The use of a background connection on the manifold allows for a simplified construction based on the existing scalar theory of full diffeomorphism invariant Colombeau algebras on manifolds, still having a canonical embedding of tensor distributions. In the particular case of the Levi-Civita connection on Riemannian manifolds one obtains that this embedding commutes with pullback along homotheties and Lie derivatives along Killing vector fields only.
\end{abstract}

\vskip 1em

\noindent
{\em MSC 2010:} Primary 46F30; Secondary 46T30

\tableofcontents

\section{Introduction} 

\noindent While the theory of distributions developed by S.~L.~Sobolev and L.~Schwartz as a generalization of classical analysis is a powerful tool for many applications, in particular in the field of linear partial differential equations, it is inherently linear and thus not well-suited for nonlinear operations. In particular, one cannot define a reasonable intrinsic multiplication of distributions (\cite{MOBook}). Even more, if one aims at embedding the space $\ccD'(\Omega)$ of distributions on some open set $\Omega \subseteq \bR^n$ into a differential algebra one is limited by the Schwartz impossibility result \cite{Schwartz} which in effect states that there can be no associative commutative algebra $\cA(\Omega)$ satisfying the following conditions:
\begin{enumerate}[(i)]
 \item There is a linear embedding $\ccD'(\Omega) \to \cA(\Omega)$ which maps the constant function $1$ to the identity in $\cA(\Omega)$.
 \item $\cA(\Omega)$ is a differential algebra with linear derivative operators satisfying the Leibniz rule.
 \item The derivations on $\cA(\Omega)$ extend the partial derivatives of $\ccD'(\Omega)$.
 \item The product in $\cA(\Omega)$ restricted to $C^k$-functions for some $k<\infty$ coincides with the usual pointwise product.
\end{enumerate}
However, it was found that such a construction is indeed possible if one requires (iv) for smooth functions only.

In the 1980s J.\ F.\ Colombeau developed a theory of generalized functions (\cite{ColTop,ColNew,ColElem,GKOS,MOBook}) displaying maximal consistency with both the distributional and the smooth theory under the restrictions dictated by the Schwartz impossibility result. A Colombeau algebra thus has come to mean a differential algebra as above containing the space of distributions as a linear subspace and the space of smooth functions as a faithful subalgebra.

The basic idea behind Colombeau algebras is to represent distributions as families of smooth functions obtained through some regularization procedure. The space of these families is then subjected to a quotient construction which ensures that the pointwise product of smooth functions is preserved. In practice one distinguishes two variants of Colombeau algebras, namely the full and the special variant. Full algebras possess a canonical embedding of distributions which allows for a more universal approach to physical models. Special algebras use a fixed mollifier for the embedding and thus are more restrictive but have a considerably simpler structure.

In the context of the special algebra on manifolds (\cite{AB,RD}) the development of generalized counterparts of elements of classical semi-Riemannian geometry was comparatively easy, leading to concepts like generalized sections of vector bundles (thus generalized tensor fields), point values, Lie and covariant derivatives, generalized vector bundle homomorphisms etc.\ (\cite{genpseudo,intrinsic,genconcurv}). However, the embedding into the special algebra is not only non-canonical, it is essentially non-geometric
(\cite[Section 3.2.2]{GKOS}). Therefore the construction of a full variant was desired.

After several attempts and preliminary work by various authors (\cite{colmani,Jelinek,sotonTF}) the full diffeomorphism invariant algebra $\Gd(\Omega)$ of generalized functions on open subsets $\Omega \subseteq \bR^n$ was presented in \cite{found}, which in turn led to the introduction of the full algebra $\hgM$ of generalized functions on a manifold $M$ in intrinsic terms in \cite{global}. On key element in this construction are \emph{smoothing kernels}, which allow for coordinate-independent regularization of scalar distributions.

The latest cornerstone in the development of geometric Colombeau algebras outlined here was the construction of a full Colombeau-type algebra of generalized tensor fields on manifolds as in \cite{global2}. Although the resulting space can be seen as a scalar extension of $\cTrsM$, i.e., a space of the form $\GhM \otimes \cTrsM$ one cannot simply use a coordinatewise embedding of $\DprsM \cong \DpM \otimes_{\CinfM} \cTrsM$: for this to be well-defined one would require the embedding $\iota: \ccD'(M) \to \GhM$ to be $\CinfM$-linear, which cannot be the case; we refer to \cite[Section 4]{global2} for an in-depth discussion of the obstructions to such tensorial extensions of generalized function algebras like $\hgM$.

The underlying deeper reason for this is that regularization of distributional tensor fields in a coordinate-invariant way requires some additional structure on the manifold in order to relate to each other the values of a tensor field at different points, namely a connection. Formally, one uses for this the derived concept of \emph{transport operators} (see Section \ref{sec_transpop}).

In the scalar case the choice between a fixed mollifier (i.e., smoothing kernel) for the embedding on the one side or the parametrization of generalized functions by all possible mollifiers on the other side results in the split between the special and the full variants of the theory, trading in simplicity for generality. In the tensor case the same situation is found: one can either fix a particular regularization procedure resp.\ transport operator for the embedding or parametrize generalized tensor fields by all possible transport operators. Hence on a basic level one can distinguish the following four characteristic types of generalized tensor algebras in the sense of Colombeau:

\begin{enumerate}
 \item {\bf Full-full}: Both mollifiers and transport operators appear as parameters of generalized objects. This results in a construction of considerable technical complexity but having very desirable properties: the embedding of tensor distributions commutes with any Lie derivatives and pullbacks along arbitrary diffeomorphmism (\cite{global2} Propositions 6.6 and 6.8). Furthermore, this algebra can be defined on any (oriented) manifold without further structure.
 \item {\bf Full-special}: If a covariant derivative is given -- in particular the Levi-Civita derivative on a Riemannian manifold -- one can use the associated transport operator (Definition \ref{def_transpop}) for the embedding, which leaves only the mollifiers as parameters for generalized tensor fields. This type is examined in this article, building on the scalar case of \cite{global}. Basically, one sees that in the category of Riemannian manifolds one hase nice properties: the embedding commutes with pullback along isometries (even homotheties) and with Lie derivatives along Killing vector fields.
 \item {\bf Special-full}: This would amount to using a fixed regularization procedure for scalars but a parametrization by all possible transport operators for the tensorial part and has not been investigated so far.
 \item {\bf Special-special}: The most simple case, this has been addressed in the literature before as cited above. Here one uses a non-canonical coordinate-dependent embedding which has obvious drawbacks but gives a simple workable theory.
\end{enumerate}

It should be noted that intermediate variants are conceivable; by treating the special-full case this work finishes one major case and is is intended to stand as a precursor to an all-encompassing study of the above types in one framework.

To recapitulate, in this work we will assume that a fixed covariant derivative is given on the manifold. This allows us to carry out a construction of a space of generalized tensor fields similar to \cite{global2}, but instead of introducing an additional parameter for the generalized objects we use the covariant derivative for embedding distributional tensor fields.

In Section \ref{sec_preliminaries} we will introduce some notation and basic definitions. In Section \ref{sec_algconst} the space of generalized tensor fields on a manifold is constructed. In Section \ref{sec_transpop} we describe transport operators; these are the formal objects used for smooting tensor distributions. In Section \ref{sec_pullback} we treat pullback and Lie derivative of generalized tensor fields. In Section \ref{sec_embed} we give the definition of the embedding of distributional tensor fields, using the background connection in an essential way. In Section \ref{sec_commut} we finally study  commutation relations of pullback along diffeomorphisms and Lie derivatives with the embedding of tensor distributions. The main result is that these commute for homotheties resp.\ Killing vector fields, but not in general.

\section{Preliminaries}\label{sec_preliminaries}

We write $A \csub B$ when $A$ is a compact subset of the interior of $B$.
The identity mapping is denoted by $\id$.
We will frequently use $I=(0,1]$ as an index set.
The topological boundary of a set $U$ is denoted by $\pd U$.
For notions of algebra we refer to \cite{Bourbaki}.
For any open set $V \subseteq \bR^n$, $\oc V$ denotes the space of compactly supported $n$-forms on $V$.

The space of smooth mappings between any subsets $U$ and $V$ of finite-dimensional vector spaces (or manifolds) is $\Cinf(U,V)$, we write $\Cinf(U)$ if $V=\bR$ or $\bC$. We use the usual Landau notation $f(\e) = O(g(\e))$ ($\e \to 0$) if there exist positive constants $C$ and $\e_0$ such that $\abso{f(\e)} \le C g(\e)$ for all $\e \le \e_0$. $\ccD(\Omega)$ denotes the space of test functions on an open subset $\Omega \subseteq \bR^n$ and $\ccD'(\Omega)$ its dual. We use multi-index notation for partial derivatives.

For differentiation theory on infinite-dimensional locally convex spaces we refer to \cite{KM} for a complete exposition of calculus on convenient vector spaces as we use it and to \cite{global2} for background information more specific to our setting. The differential $\ud\colon \Cinf(U, F) \to \Cinf(U, L(E,F))$ is that of \cite[Theorem 3.18]{KM}. Several smoothness arguments are identical to the corresponding ones in \cite{global2} and will only be referred to at the appropriate places.

Our basic references for differential geometry are \cite{Klingenberg,Marsden}. A manifold will always mean a second countable 
Hausdorff smooth manifold of finite dimension that is (except in Section \ref{sec_transpop}) oriented. This dimension will be denoted by $n$ throughout if not otherwise stated.
Charts are written as a pair $(U, \varphi)$ with $U$ an open subset of the manifold and $\varphi$ a homeomorphism from $U$ to an open subset of $\bR^n$.
A vector bundle $E$ with base $M$ is denoted by $E \to M$, its fiber over the point $p \in M$ by $E_p$. The space of sections of $E$ is denoted by $\Gamma(E)$, the space of sections with compact support by $\Gamma_c(E)$, and the space of sections with compact support in a set $L \subseteq M$ by $\Gamma_{c,L}(E)$.
$\tang M$ resp.\ $\tang^* M$ is the tangent resp.\ cotangent bundle of $M$, $\Lambda^n\tang^*M$ is the vector bundle of exterior $n$-forms on $M$.
A particular vector bundle we will use is $\Gamma(\pr_2^*\TrsM)$, the pullback of the tensor bundle $\TrsM$ along the projection of $M \times M$ onto the second factor.
$\fX(M)$ resp.\ $\fX^*(M)$ is the space of vector resp.\ covector fields, $\ocM$ denotes the space of compactly supported $n$-forms and $\cTrsM$ the space of $(r,s)$-tensor fields on $M$. We set $\cT^0_0(M) \coleq \CinfM$. $\ccD(M)$ is the space of test functions on $M$, i.e., the space of smooth functions with compact support. For a diffeomorphism $\mu\colon M \to N$ between manifolds $M$ and $N$, $\mu^*$ denotes pullback of whatever object in question along $\mu$ and we set $\mu_* \coleq (\mu^{-1})^*$. $\tang\mu$ is the tangent map of $\mu$, $(\tang\mu)^r_s$ the corresponding map on the tensor bundle $\TrsM$. The result of the action of a tensor field $t \in \cTrsM$ on a dual tensor field $u \in \cTsrM$ is written as $t \cdot u$. $\LX$ denotes the Lie derivative with respect to a vector field $X$, the flow of $X$ is written as $(t,p) \mapsto \Fl^X_t p$ with $t \in \bR$ and $p \in M$. Given a covariant derivative on $M$ a subset of $M$ is called (geodesically) convex if any two of its points can be connected by a unique geodesic contained in this set.

If $M$ is endowed with a Riemannian metric $g$ we speak of the Riemannian manifold $(M,g)$. The norm induced by $g$ is denoted by $\norm{\cdot}_g$, a metric ball of radius $r>0$ about $p \in M$ with respect to $g$ by $B^g_r(p)$. Following the notation of
\cite[Definition 1.5.1]{Klingenberg}
a covariant derivation is a mapping $\fX(M) \times \fX(M) \to \fX(M)$ locally determined by a family of Christoffel symbols, which are smooth mappings $\Gamma\colon \varphi(U) \to \Lin^2(\bR^n \times \bR^n, \bR^n)$ for each chart $(U, \varphi)$ satisfying the appropriate transformation rule.

Our basic reference for distributions on manifolds is \cite[Section 3.1]{GKOS}.
We define the spaces of scalar distributions resp.\ of $(r,s)$-tensor distributions on an oriented manifold $M$ as
\[ \DpM \coleq \ocM'\quad\text{resp.}\quad \DprsM \coleq \Gamma_c(\tang^s_rM \otimes \Lambda^n\tang^*M)' \]
where the spaces of compactly supported sections carry the usual (LF)-topology and $\DpM$ and $\DprsM$ are endowed with the strong dual topology. We will furthermore make use of the isomorphic representations (\cite[Corollary 3.1.15]{GKOS})
\begin{align*}
 \DprsM \cong \Lin_{\CinfM}(\cTsrM, \DpM) \cong \DpM \otimes_\CinfM \cTrsM.
\end{align*}
The action of a tensor distribution $T \in \DprsM$ will accordingly be denoted by either of
$\langle T, \xi \rangle = \langle T, s \otimes \omega \rangle = \langle T(s), \omega \rangle$ with $\xi$ corresponding to $s \otimes \omega$ via the isomorphism
$ \Gamma_c(\TsrM \otimes \Lambda^n\tang^*M) \cong \cTsrM \otimes_\CinfM \ocM. $
By $\cE'(\Omega) \subseteq \ccD'(\Omega)$ we denote the space of distributions with compact support in $\Omega \subseteq \bR^n$ (this is only used in Section \ref{sec_commut}).

Given a chart $(U, \varphi)$ on $M$, to each distribution $T \in \DpU$ there corresponds a unique distribution in $\ccD'(\varphi(U))$ also denoted by $T$ such that for all $\omega \in \oc U$ with support in $U$ and local representation $\omega(x) = f(x)\, \ud x^1 \wedge \dotsc \wedge \ud x^n$ with $f \in \ccD(\varphi(U))$ the relation $\langle T, \omega \rangle = \langle T, f \rangle$ holds. More explicitly we may also write $\langle T(p), \omega(p) \rangle = \langle T(x), f(x) \rangle$.

For $T \in \DprsU$ and $s \otimes \omega \in \cTsrU \otimes_{\Cinf(U)} \oc U$ we write
$\langle T, s \otimes \omega \rangle = \sum \langle T^\lambda, s_\lambda \cdot \omega \rangle$
where the $T^\lambda \in \DpM$ are the coordinates of $T$ and the $s_\lambda \in C^\infty(U)$ are the coordinates of $s$ on $U$ for $\lambda$ in some index set.

Concerning the theory of local diffeomorphism-invariant Colombeau algebras and the corresponding global construction on manifolds we refer to \cite{found,global}.

\section{Generalized tensor fields}\label{sec_algconst}

In this section we will detail the construction of an algebra of generalized tensor fields. As in other variants of Colombeau algebras the basic idea is that generalized objects are families of their smooth counterparts indexed by some parameters which are required for regularizing the corresponding distributional objects. Our case is a direct extension of the full algebra $\GhM$ of \cite{global} and contains it as the special case $r=s=0$. Scalar distributions are regularized by means of compactly supported $n$-forms having integral $1$, the space of which is denoted by $\ub0M$; as seen in Section \ref{sec_embed}, a connection on the tangent bundle provides the further means to also regularize tensor distributions, hence the indexing set for the basic space remains the same: instead of $\Cinf(\ub0M, \CinfM)$ from the scalar theory we simply take $\Cinf(\ub0M, \cTrsM)$. Replacing the absolute value of scalars by the norm of tensors with respect to any Riemannian metric (all of which are equivalent on compact sets) we will be able to use the same notion of moderateness resp.\ negligibility as in the scalar case (cf.~\cite[Definitions 3.10 and 3.11]{global}).

The definitions of this section are given for arbitrary oriented manifolds; for embedding distributions we will later on assume that a covariant derivative is given.

\subsection{Smoothing kernels}

Smoothing kernels lie at the basis of the construction of full diffeomorphism invariant Colombeau algebras. We recall from \cite[Section 3]{global} that for $\Phi \in \Cinf(I \times M, \ocM)$ the Lie derivatives in both slots are given by $\LX\Phi \coleq \LX \circ \Phi$ and $(\LX'\Phi)(\e,p) \coleq \left.\frac{\ud}{\ud t}\right|_{t=0} \Phi(\e, \Fl^X_t p)$ for $\e \in I$ and $p \in M$.

\begin{definition}Let $M$ be an oriented manifold.
A map $\Phi \in \Cinf(I \times M, \ub0M)$ is called a \emph{smoothing kernel} if it satisfies the following conditions for any Riemannian metric $g$ on $M$:\begin{enumerate}
\item[(i)] $\forall K \csub M$ $\exists \e_0,C>0$ $\forall p \in K$ $\forall \e\le\e_0$: $\supp \Phi(\e,p) \subseteq B^g_{\e C}(p)$,
\item[(ii)] $\forall K \csub M$ $\forall l,m \in \bN_0$ $\forall \theta_1,\dotsc,\theta_m,\zeta_1,\dotsc,\zeta_l \in \fX(M)$ we have
\[ \!\!\!\!\!\!\!\!\!\!\sup_{\substack{p \in K\\q \in M}}\norm{(\Lie_{\theta_1}\dotsc \Lie_{\theta_m} (\Liep_{\zeta_1} + \Lie_{\zeta_1}) \dotsc (\Liep_{\zeta_l} + \Lie_{\zeta_l})\Phi)(\e,p)(q)}_g = O(\e^{-n-m}). \]
\end{enumerate}
The space of all smoothing kernels is denoted by $\sk 0M$. For each $k \in \bN$ we denote by $\sk kM$ the set of all $\Phi \in \sk 0M$ such that $\forall f \in \CinfM$ the value of 
$\abso{f(p) - \int_M f \cdot \Phi(\e,p)}$ is $O(\e^{k+1})$ uniformly on compact sets.
\end{definition}

By the following Lemma whose straightforward proof is omitted (cf.~\cite[Lemma 3.4]{global}) this definition is independent of the metric used.

\begin{lemma}\label{lama}Let $(M, g)$ and $(N, h)$ be Riemannian manifolds. Given a diffeomorphism $\mu\colon M \to N$ and a compact set $K \csub M$ there exists a constant $C>0$ such that
 \begin{enumerate}
  \item[(i)] $\norm{(\mu^*t)(p)}_g \le C \norm{t(\mu(p))}_h$ $\forall t \in \cTrsN$ $\forall p \in K$.
  \item[(ii)] $\norm{(\mu^*\omega)(p)}_g \le C \norm{\omega(\mu(p))}_h$ $\forall \omega \in \ocN$ $\forall p \in K$.
  \item[(iii)] $B_r^g(p) \subseteq \mu^{-1}(B_{r C}^h(\mu(p))) = B_{rC}^{\mu^*h}(p)$ for all small $r>0$ and $\forall p \in K$.
 \end{enumerate}
\end{lemma}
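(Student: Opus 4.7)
The three assertions are essentially continuity facts sharpened by compactness of $K$. Throughout I would fix a relatively compact open neighborhood $L$ of $K$ in $M$ so that $\mu(\overline L)$ is compact in $N$.

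For (i), the pullback $\mu^*$ acts fiberwise on tensors at $\mu(p)$ by tensor powers of $(T_p\mu)^{-1}\colon T_{\mu(p)}N\to T_pM$ and of its transpose $(T_p\mu)^*\colon T^*_pM\to T^*_{\mu(p)}N$. Measured with the fiber norms induced by $g_p$ and $h_{\mu(p)}$, the operator norms of these maps are continuous in $p$, hence bounded on the compact set $K$; taking tensor products and summing over a finite index set for the $(r,s)$ components yields a constant $C_1$ such that $\norm{(\mu^*t)(p)}_g\le C_1\norm{t(\mu(p))}_h$ uniformly in $t$ and $p\in K$. Assertion (ii) is the same argument applied to the fiberwise map $\Lambda^n(T_p\mu)^*\colon \Lambda^nT^*_{\mu(p)}N\to \Lambda^nT^*_pM$, yielding a constant $C_2$.

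For (iii), I would first observe that $\mu\colon(M,\mu^*h)\to(N,h)$ is a Riemannian isometry by definition of the pullback metric, which preserves arc length of every smooth curve and hence distance; therefore
\[
  \mu^{-1}\bigl(B^h_{rC}(\mu(p))\bigr)=B^{\mu^*h}_{rC}(p),
\]
so the equality in (iii) is automatic. For the inclusion $B^g_r(p)\subseteq B^{\mu^*h}_{rC}(p)$, I would apply part (i) to the pullback of the covariant $(0,2)$-tensor $h$ on the larger compact set $\overline L$, obtaining a constant $C_3$ with $\norm{v}_{\mu^*h}\le C_3\norm{v}_g$ for every tangent vector $v$ based at any point of $\overline L$. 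Arc-lengths of curves contained in $L$ then satisfy the same inequality, and hence so do distances, \emph{provided} one can restrict attention to curves staying inside $L$.

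The only genuinely non-formal step is this uniform-containment property: I need $r_0>0$ such that $B^g_{r}(p)\subseteq L$ for all $p\in K$ and $r\le r_0$. This follows from the positivity and lower semicontinuity of $p\mapsto \dist_g(p,M\setminus L)$ on $K$ together with the fact that any $q\in B^g_r(p)$ is connected to $p$ by a $g$-curve of length $<r$, all of whose points lie within $g$-distance $r$ of $p$. Taking $C\coleq\max(C_1,C_2,C_3)$ then yields a single constant witnessing (i)--(iii), and the proof concludes. The main (and only) obstacle is thus the uniform-in-$K$ choice of $r_0$ in (iii); everything else is a direct consequence of the smoothness of $\mu$ and compactness of $K$.
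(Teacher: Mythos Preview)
Your argument is correct and is exactly the routine compactness-and-continuity argument the paper intends; the paper in fact omits the proof entirely, calling it straightforward and referring to \cite{global}, Lemma~3.4. One minor streamlining for (iii): the inequality $\norm{v}_{\mu^*h}\le C_3\norm{v}_g$ on $\overline L$ follows directly from the boundedness on $\overline L$ of the operator norm of $T_p\mu\colon (T_pM,g_p)\to (T_{\mu(p)}N,h_{\mu(p)})$, which you already invoke in proving (i), so there is no need to route the argument through (i) applied to the tensor $h$.
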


One furthermore easily verifies the following.

\begin{proposition}\label{lahmah}Let $M,N$ be oriented manifolds and $\mu\colon M \to N$ an orientation preserving diffeomorphism. Then the map $\mu^*\Phi\colon I \times M \to \ocM$ defined by $(\mu^*\Phi)(\e,p) \coleq \mu^*(\Phi(\e,\mu(p)))$ is in $\sk kM$.
\end{proposition}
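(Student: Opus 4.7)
The plan is to verify in turn that $\mu^*\Phi$ is smooth with values in $\ub{0}{M}$, satisfies the support condition (i), the derivative bound (ii), and, for $k \ge 1$, the moment condition defining $\sk{k}{M}$. To avoid juggling two unrelated Riemannian metrics I will fix an auxiliary metric $h$ on $N$ and use $g \coleq \mu^*h$ throughout on $M$; by Lemma \ref{lama} and the metric-independence of the smoothing kernel conditions this loses no generality. Compact sets correspond under $\mu$, $\mu(K) \csub N$ iff $K \csub M$, and smoothness of $\mu^*\Phi$ follows because both $p \mapsto \mu(p)$ and $\mu^* \colon \oc{N} \to \oc{M}$ are smooth; the normalization $\int_M \mu^*\Phi(\e,p) = 1$ is preserved because $\mu$ is orientation-preserving, so indeed $\mu^*\Phi(\e,p) \in \ub{0}{M}$.

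For (i), by naturality of supports under pullback, $\supp(\mu^*\Phi)(\e,p) = \mu^{-1}(\supp \Phi(\e,\mu(p)))$. Applied to the compact set $\mu(K)$, condition (i) for $\Phi$ yields $C, \e_0 > 0$ with $\supp \Phi(\e,\mu(p)) \subseteq B^h_{\e C}(\mu(p))$ for $p \in K$, $\e \le \e_0$, and taking preimages gives $\supp(\mu^*\Phi)(\e,p) \subseteq B^{\mu^*h}_{\e C}(p)$ by Lemma \ref{lama}(iii). The key point for (ii) is that pullback commutes with Lie derivatives via the intertwining $\mu \circ \Fl^X_t = \Fl^{\mu_* X}_t \circ \mu$. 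Concretely, for any $\theta, \zeta \in \fX(M)$ one obtains
\[ (\LX[\theta] \mu^*\Phi)(\e,p) = \mu^* ((\LX[\mu_*\theta] \Phi)(\e,\mu(p))), \quad (\Liep_\zeta \mu^*\Phi)(\e,p) = \mu^*((\Liep_{\mu_*\zeta}\Phi)(\e,\mu(p))), \]
and the combination $\Liep + \Lie$, which is the total derivative of $\Phi$ along the diagonal action of the flow on both slots, behaves in the same way. Iterating, the Lie-derivative expression inside the norm in (ii) for $\mu^*\Phi$ at $(p,q)$ equals $\mu^*$ applied to the analogous expression for $\Phi$ at $(\mu(p),\mu(q))$ with vector fields $\mu_*\theta_i$, $\mu_*\zeta_j$. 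Lemma \ref{lama}(ii) with metric $g = \mu^*h$ on the compact set $K$ (and the whole of $M$ for $q$) then transports the $O(\e^{-n-m})$ bound for $\Phi$ on $\mu(K)$ to the required bound for $\mu^*\Phi$ on $K$.

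Finally, for $k \ge 1$ the moment condition follows from the change-of-variables formula
\[ \int_M f \cdot (\mu^*\Phi)(\e,p) = \int_N (\mu_*f) \cdot \Phi(\e,\mu(p)), \]
so that $\abso{f(p) - \int_M f \cdot (\mu^*\Phi)(\e,p)} = \abso{(\mu_*f)(\mu(p)) - \int_N (\mu_*f)\cdot \Phi(\e,\mu(p))}$ which is $O(\e^{k+1})$ uniformly for $\mu(p) \in \mu(K)$ by the corresponding property of $\Phi \in \sk{k}{N}$. The main technical point requiring care is the commutation of the combined operator $\Liep + \Lie$ with $\mu^*$ in (ii); beyond that, everything is a direct transport of the defining conditions from $N$ to $M$ via the diffeomorphism, so no essential obstacle is expected.
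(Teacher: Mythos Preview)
Your proposal is correct and is precisely the straightforward verification the paper has in mind; the paper itself gives no proof, merely stating that ``one furthermore easily verifies'' the result. Your device of fixing $g = \mu^*h$ so that $\mu$ becomes an isometry is a clean way to make all the metric comparisons exact (in particular the supremum over all $q \in M$ in (ii) causes no trouble, since with this choice the norm is preserved pointwise, not just up to a constant on compacta), and the commutation of $\Lie$, $\Liep$ and hence $\Liep + \Lie$ with $\mu^*$ via $\mu_*$ of the vector fields is exactly the expected naturality.
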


We call $\mu^*\Phi$ the pullback of $\Phi$. Due to the canonical isomorphism of $\oc \varphi(U)$ and $\Cinf(\varphi(U))$ a smoothing kernel $\Phi \in \ub0U$ on a chart $(U, \varphi)$ has \emph{local expression} $\phi \in \Cinf(I \times \varphi(U), \Cinf(\varphi(U)))$ (for details see \cite{gfproc}).

\subsection{The basic spaces}

We now introduce the basic space and appropriate moderateness and negligibility tests. Let $r,s \in \bN_0$ throughout.

\begin{definition}\label{basedef}The \emph{basic space} of generalized $(r,s)$-tensor fields on an oriented manifold $M$ is defined as $\hers(M) \coleq \Cinf(\ub0M, \cTrsM)$.
An element $R \in \hers(M)$ is called \emph{moderate} if it satisfies, for any Riemannian metric $g$ on $M$,
\begin{equation*}
\begin{split}
\forall K \csub M\ \forall l \in \bN_0\ \exists N \in \bN\ \forall X_1,\ldots,X_l \in \fX(M)\\
\forall\ \Phi \in \sk 0M: \sup_{p \in K}\norm{\Lie_{X_1} \ldots \Lie_{X_l} R(\Phi(\e,p))(p)}_g = O(\e^{-N})
\end{split}
\end{equation*}
and \emph{negligible} if additionally it satisfies
\begin{equation*}
\begin{split}
\forall K \csub M\ \forall l,m \in \bN_0\ \exists k \in \bN\ \forall X_1,\ldots,X_l \in \fX(M)\\
\forall \Phi \in \sk kM: \sup_{p \in K}\norm{\Lie_{X_1} \ldots \Lie_{X_l} R(\Phi(\e,p))(p)}_g = O(\e^m).
\end{split}
\end{equation*}
where for $r=s=0$ the norm is replaced by the absolute value. The spaces of moderate and negligible generalized $(r,s)$-tensor fields on $M$ are denoted by $\hersm(M)$ and $\hnrs(M)$, respectively.
\end{definition}

By Lemma \ref{lama} this definition is independent of the metric used. The following Lemma will alleviate the need to use further constructions with cut-off functions in several proofs.

\begin{lemma}\label{tricklein}In Definition \ref{basedef} one can replace ``$\forall \Phi \in \sk 0M$'' resp.\ ``$\forall \Phi \in \sk kM$'' by ``$\exists U \supseteq K$ open $\forall \Phi \in \sk 0U$'' resp.\ ``$\exists U \supseteq K$ open $\forall \Phi \in \sk kU$''. Furthermore, one can instead of ``$\exists U \supseteq K$ open'' demand ``$\forall U \supseteq K$ open''. Finally, in testing one can assume that $K$ is contained in a chart domain.
\end{lemma}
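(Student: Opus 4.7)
The plan rests on the observation that, by condition (i) in the definition of a smoothing kernel, for any compact $K \csub V$ in an open set $V$ and any $\Phi \in \sk 0 V$, the support $\supp\Phi(\e,p)$ is contained in any prescribed open neighborhood of $K$ once $\e$ is small enough. Hence the quantity $R(\Phi(\e,p))(p)$ appearing in the tests depends, asymptotically as $\e \to 0$ uniformly in $p \in K$, only on the restriction of $\Phi$ to a neighborhood of $K$, so one may freely replace $\Phi$ by another kernel agreeing with it near $K$ for small $\e$.

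From this I would prove the equivalence of the three conditions ``$\forall \Phi \in \sk 0 M$'', ``$\exists U \supseteq K$ open, $\forall \Phi \in \sk 0 U$'', and ``$\forall U \supseteq K$ open, $\forall \Phi \in \sk 0 U$'' (as well as the analogous equivalence with $\sk k M$ and $\sk k U$ in place of $\sk 0 M$ and $\sk 0 U$). The nontrivial implications reduce to the following construction. Given open sets $V_1, V_2 \supseteq K$ and a kernel $\Phi_1 \in \sk 0 {V_1}$ (resp.\ $\sk k {V_1}$), produce $\Phi_2 \in \sk 0 {V_2}$ (resp.\ $\sk k {V_2}$) with $\Phi_2(\e,p) = \Phi_1(\e,p)$ whenever $p$ lies in some fixed neighborhood of $K$ and $\e$ is small. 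Choose a cutoff $\chi \in \ccD(V_1 \cap V_2)$ with $\chi \equiv 1$ on a neighborhood of $K$, a reference kernel $\Psi_0 \in \sk 0 {V_2}$ (resp.\ $\sk k {V_2}$), and a smooth $\rho\colon I \to [0,1]$ equal to $1$ for $\e$ small and $0$ for $\e$ bounded away from $0$, chosen so that $\supp \Phi_1(\e,p) \subseteq V_2$ whenever $\rho(\e)\chi(p) \ne 0$. Set
\[ \Phi_2(\e,p) \coleq \rho(\e)\chi(p)\Phi_1(\e,p) + \bigl(1 - \rho(\e)\chi(p)\bigr)\Psi_0(\e,p), \]
extending the first summand by $0$ outside $V_1 \cap V_2$. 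Normalization $\int \Phi_2(\e,p) = 1$ is immediate, the smoothing-kernel conditions (i)--(ii) transfer from $\Phi_1$ and $\Psi_0$ by Leibniz, and the higher-order moment condition (for the $k \ge 1$ case) is preserved by the convex-combination structure. Since $\Phi_2 = \Phi_1$ near $K$ for small $\e$, the tested estimate transfers; the upgrade from ``$\exists U$'' to ``$\forall U$'' is then automatic, as all variants are equivalent to the condition on $\sk 0 M$.

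For the final claim I cover a general $K \csub M$ by finitely many chart domains $(W_i,\varphi_i)$ and pick compact $K_i \csub W_i$ with $K = \bigcup_i K_i$ by a standard shrinking argument; since $\sup_{p \in K} = \max_i \sup_{p \in K_i}$, testing on each $K_i$ individually yields the estimate on $K$, and conversely the necessity on each $K_i \subseteq K$ is obvious. The main technical nuisance I anticipate is the verification of condition (ii) for $\Phi_2$: one has to expand $\Lie_{\theta_1}\dotsc(\Liep_{\zeta_l} + \Lie_{\zeta_l})\Phi_2$ via Leibniz, noting that $\Lie_\theta$ acts only on the $q$-slot of the forms $\Phi_1,\Psi_0$ while $\Liep_\zeta$ acts on the $p$-slot and on $\chi$; each resulting term combines bounded derivatives of $\chi,\rho$ with the $O(\e^{-n-m})$ bounds for $\Phi_1$ and $\Psi_0$ guaranteed by their smoothing-kernel status, so the overall bound comes out unchanged. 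No deeper difficulty is expected.
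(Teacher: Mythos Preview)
Your proposal is correct and follows essentially the same approach as the paper: the paper also reduces everything to the single nontrivial implication and uses the identical convex-combination construction $\Phi(\e,p) = (1-\lambda(\e)\theta(p))\Psi_0(\e,p) + \lambda(\e)\theta(p)\Psi(\e,p)$ with a cutoff in $p$ supported in the intersection of the two open sets and a cutoff in $\e$. You are more explicit than the paper about verifying that the glued object is again a smoothing kernel (the paper does not comment on this) and about the chart-domain reduction (which the paper dismisses as ``clear''), but these are elaborations of the same argument rather than a different route.
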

\begin{proof}The only nontrivial part is to show that ``$\exists U \supseteq K$ open'' implies ``$\forall U \supseteq K$ open''. Let $U,V \subseteq M$ both be open subsets of $M$ and let $R \in \hers(M)$ satisfy the moderateness resp.\ negligibility test for all $\Phi \in \sk kU$. Let $K \csub U \cap V$. Given $\Psi \in \sk kV$, let $0 < \delta < \dist(K, \pd (U \cap V))$. Choose $\theta \in \ccD(M)$ with $\supp \theta \subseteq B_\delta(K)$ and $\theta=1$ on $\overline{B}_{\delta/2}(K)$. Let $\e_0>0$ such that $\supp \Psi(\e,p) \subseteq B_\delta(K)$ for all $\e < \e_0$ and $p \in \supp \theta$. With $\lambda \in \Cinf(\bR)$ such that $\lambda=1$ on $(-\infty, \e_0/2]$ and $\lambda=1$ on $[\e_0, \infty)$ define $\Phi \in \sk kU$ by
 \[ \Phi(\e,p) \coleq (1-\lambda(\e)\theta(p)) \Psi_0(\e,p) + \lambda(\e)\theta(p)\Psi(\e,p) \]
where $\Phi_0 \in \sk kU$ is arbitrary. Then for $\e \le \e_0/2$ and $p \in B_{\delta/2}(K)$, $R(\Psi(\e,p))$ equals $R(\Phi(\e,p))$ which satisfies the respective test.
The last claim is clear.
\end{proof}

$\hers(M)$, $\hersm(M)$, and $\hnrs(M)$ are $\CinfM$-modules and $\NhrsM$ is a submodule of $\hersmM$ so we can form the quotient space.
\begin{definition}The \emph{space of generalized $(r,s)$-tensor fields} is defined as the quotient $\CinfM$-module $\hgrs(M) \coleq \hersm(M)/\hnrs(M)$. Smooth tensor fields are embedded into $\hersM$ via the $\CinfM$-linear mapping
$\sirs\colon \cTrsM \to \hers(M)$, $\sirs(t)(\omega) \coleq t$.
\end{definition}

Evidently $\sirs$ has moderate values. The corresponding mapping into the quotient $\GhrsM$ is easily seen to be injective.

For $r=s=0$ the above definitions reproduce -- up to an application of the exponential law $\Cinf(\ub0M, \CinfM) \cong \Cinf(\ub0M \times M)$ -- exactly the global algebra $\hgM$ and the related spaces $\heM$, $\hemM$, and $\hat\cN(M)$ of \cite{global} as well as the embedding $\sigma: \CinfM \to \GhM$. By standard algebraic techniques one obtains the following useful isomorphisms.

\begin{proposition}\label{isos}For $(r,s) \ne 0$ one has the canonical isomorphisms
\[
\hersM \cong \Lin_\CinfM(\cTsrM, \heM) \cong \heM \otimes_{\CinfM} \cTrsM
\]
explicitly given by $R(u)(\omega) \coleq R(\omega) \cdot u$, $(F\otimes t)(\omega) \coleq  F(\omega) \cdot t$, and $(F \otimes t)(u) \coleq F \cdot \sigma(t \cdot u)$ for $R \in \hersM$, $F \in \heM$, $t \in \cTrsM$, and $u \in \cTsrM$.
\end{proposition}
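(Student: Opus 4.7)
The proof reduces to linear algebra once we exploit that $\cTrsM$ and $\cTsrM$ are finitely generated projective $\CinfM$-modules. This holds because on a second countable Hausdorff smooth manifold $M$, any smooth vector bundle embeds as a subbundle of some trivial bundle $M\times\bR^N$ (standard partition of unity argument), realising $\cTrsM$ as a direct summand of $\CinfM^N$ via continuous $\CinfM$-linear maps. This is the only non-formal input we need beyond the classical scalar duality $\Lin_\CinfM(\cTsrM,\CinfM)\cong \cTrsM$ of \cite[Cor.\ 3.1.15]{GKOS}.

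For the first isomorphism $\hersM\cong\Lin_\CinfM(\cTsrM,\heM)$, I would define $\Psi(R)(u)(\omega)\coleq R(\omega)\cdot u$. Smoothness of $\omega\mapsto R(\omega)\cdot u$ follows from smoothness of $R$ and the continuity of the contraction pairing, giving well-definedness, while $\CinfM$-linearity in $u$ is immediate. Injectivity is inherited from the pointwise form of the classical duality. For surjectivity, given $L$, define $R(\omega)\in\cTrsM$ pointwise by $R(\omega)\cdot u=L(u)(\omega)$, again via the duality. The critical step is proving $\omega\mapsto R(\omega)$ is smooth into $\cTrsM$: embedding $\cTrsM\hookrightarrow\CinfM^N$ via the continuous $\CinfM$-linear map $\iota$ supplied by projectivity, smoothness reduces to that of each component $(\iota R(\omega))_k\in\CinfM$; since $\ell_k\circ\iota\colon\cTrsM\to\CinfM$ is $\CinfM$-linear, it equals $t\mapsto t\cdot u_k$ for some $u_k\in\cTsrM$ (once more by the scalar duality), so $(\iota R(\omega))_k=L(u_k)(\omega)$, which is smooth in $\omega$ because $L(u_k)\in\heM$.

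For the second isomorphism $\Lin_\CinfM(\cTsrM,\heM)\cong\heM\otimes_\CinfM\cTrsM$, I would invoke the standard algebraic identity $\Lin_R(P,A)\cong A\otimes_R P^*$ valid for finitely generated projective modules $P$ over a commutative ring $R$ and any $R$-module $A$, applied with $R=\CinfM$, $P=\cTsrM$, $A=\heM$, and combined with $(\cTsrM)^*\cong\cTrsM$. The asserted formula $(F\otimes t)(u)=F\cdot\sigma(t\cdot u)$ is precisely the image of an elementary tensor under this abstract map, and the remaining formula $(F\otimes t)(\omega)=F(\omega)\cdot t$ then follows by composing with $\Psi^{-1}$. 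The main obstacle throughout is the smoothness argument in surjectivity of $\Psi$, where convenient calculus meets the module-theoretic setting; it hinges essentially on the finitely generated projective structure of $\cTrsM$ over $\CinfM$, while the rest is formal algebra.
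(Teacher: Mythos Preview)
Your proposal is correct. The paper itself omits the proof entirely, dismissing it with the phrase ``by standard algebraic techniques,'' so there is no approach to compare against; your argument via finite projectivity of $\cTrsM$ over $\CinfM$ is precisely the standard route the paper is alluding to, and your handling of the one genuinely non-formal point---smoothness of $\omega\mapsto R(\omega)$ in the surjectivity step, reduced to componentwise smoothness via a splitting $\cTrsM\hookrightarrow\CinfM^N$---is sound in the convenient setting.
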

These restrict to isomorphisms on the appropriate subspaces of moderate and negligible functions and thus induce similar isomorphisms for $\GhrsM$. In the sequel we will use the notation $R(u)$ as above or equally $R \cdot u$ without furter notice.

Proposition \ref{isos} also says that $\hersM$ is obtained from $\cTrsM$ by extending its ring of scalars. As a consequence the tensor product of $R = F \otimes t$ and $R' = F' \otimes t'$ (we use the equality sign for elements corresponding via the isomorphism $\hersM \cong \heM \otimes_\CinfM \cTrsM$) is given by $R \otimes R' = (F \cdot F') \otimes (t \otimes t')$. One defines the tensor algebra in the usual way.

\section{Transport operators}\label{sec_transpop}

As will be seen in Section \ref{sec_embed}, in order to regularize distributional tensor fields in a coordinate-invariant way one needs a connection on the tangent bundle for relating fibers over different points of the manifold. In order to formalize the regularization process so-called \emph{transport operators}, which provide linear mappings between any two fibers of the tensor bundle, were introduced in \cite{sotonTF} und further developed in \cite{global2}. We will detail their construction and their relation to covariant derivatives here. Given a covariant derivative on a manifold there is a natural way to obtain a transport operator: locally (in convex neighborhoods) any two points are connected by a unique geodesic along which we can parallel transport tensor fields.

We introduce the following definitions. Let $M,N$ be arbitrary manifolds (not necessarily orientable). For any two vector bundles $E \to M$ and $F \to N$ we define the vector bundle
\[ \TO(E,F) \coleq \bigcup_{(p,q) \in M \times N} \{ (p,q)\} \times \Lin(E_p, F_q). \]
The fiber over $(p,q)$ consists of the space of linear maps from $E_p$ to $F_q$. A section of $\TO(E,F)$, called \emph{transport operator}, is locally given by a smoothly parametrized matrix.

We will now define a transport operator coming from any covariant derivative $\nabla$ on $M$. Let $(U, \varphi)$ be a chart on $M$ and set $U' \coleq \varphi(U)$. Let $\Gamma$ denote the Christoffel symbol of $\nabla$ on $U$. From standard results of ODE theory (cf. \cite{Amann}) the geodesic equation
\begin{equation}\label{geodODE}
\dot u=v,\ \dot v=-\Gamma(u)(v,v),\ u(0) = x \in U',\ v(0) = w \in \bR^n
\end{equation}
has a unique solution $(u,v)(t,x,w)$ defined for $t$ in an open interval $J(x,w) \subseteq \bR$ containing $0$. $(u,v)$ is defined and smooth on the open set
\[ \{\, (t,x,w)\ |\ x \in U', w \in \bR^n, t \in J(x,w)\,\} \subseteq \bR \times U' \times \bR^n.\]
By differentiating system \eqref{geodODE} one obtains the following. Note that differentiation with respect to time $t$ will be denoted by a overhead dot as in $\dot \sigma(t,x,y)$ while a prime denotes the differential with respect to all space variables, as in $\sigma'(t,x,y)\cdot (\xi_1, \xi_2)$ or $X'(x) \cdot \xi_1$.
\begin{lemma}\label{uvder}For $x \in U'$ and $t \in J(x, 0)$ the mappings $u$, $v$, and their derivatives in directions $(\xi_1, \eta_1),(\xi_2, \eta_2) \in \bR^n \times \bR^n$ are given by
\begin{gather*}
u(t,x,0) = x, \quad v(t,x,0) = 0, \\
u'(t,x,0) \cdot (\xi_1, \eta_1) = \xi_1 + t\eta_1, \quad v'(t,x,0) \cdot (\xi_1, \eta_1) = \eta_1, \\
u''(t,x,0) \cdot ((\xi_1, \eta_1),(\xi_2, \eta_2)) = -t^2/2\cdot (\Gamma(x) (\eta_1, \eta_2) + \Gamma(x) (\eta_2, \eta_1)), \\
v''(t,x,0) \cdot ((\xi_1, \eta_1),(\xi_2, \eta_2)) = -t\cdot (\Gamma(x)(\eta_1, \eta_2) + \Gamma(x)(\eta_2, \eta_1)).
\end{gather*}
\end{lemma}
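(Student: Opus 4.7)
The plan is to prove this by differentiating the ODE system \eqref{geodODE} with respect to the initial data $(x,w)$ once and twice, and then specializing to $w=0$, where everything collapses because the constant map $t \mapsto (x,0)$ is a solution.

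First I would verify the zeroth-order identities by direct inspection: the curve $t \mapsto (x,0)$ satisfies $\dot u = v = 0$ and $\dot v = -\Gamma(x)(0,0) = 0$ with the correct initial conditions, so by uniqueness $u(t,x,0)=x$ and $v(t,x,0)=0$ on $J(x,0)$.

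Next, I would obtain the first-order derivatives by formally differentiating \eqref{geodODE} in $(x,w)$ along $(\xi_1,\eta_1)$. Writing $U \coleq u'\cdot(\xi_1,\eta_1)$ and $V\coleq v'\cdot(\xi_1,\eta_1)$, the linearized system is $\dot U = V$ and
\[ \dot V = -\Gamma'(u)(U)(v,v) - \Gamma(u)(V,v) - \Gamma(u)(v,V), \]
with $U(0)=\xi_1$, $V(0)=\eta_1$. Evaluating along $v\equiv 0$ kills the right-hand side, so $V$ is constant equal to $\eta_1$ and then $U(t) = \xi_1 + t\eta_1$, as claimed.

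For the second-order derivatives I would differentiate the linearized system once more in direction $(\xi_2,\eta_2)$. Setting $F(u,v) \coleq \Gamma(u)(v,v)$ and $W \coleq v''\cdot((\xi_1,\eta_1),(\xi_2,\eta_2))$, the chain rule gives
\[ \dot W = -D^2F(u,v)\bigl((U_1,V_1),(U_2,V_2)\bigr) - DF(u,v)\bigl(u''\cdot(\cdot,\cdot),W\bigr), \]
where $U_i,V_i$ denote the first-order derivatives computed above. At $v=0$ the operator $DF(u,0)$ vanishes identically, and in $D^2F(x,0)$ the only surviving contribution is the bilinear pairing in the two $v$-slots, i.e.\ $\Gamma(x)(V_1,V_2)+\Gamma(x)(V_2,V_1)=\Gamma(x)(\eta_1,\eta_2)+\Gamma(x)(\eta_2,\eta_1)$. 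All other terms either contain a factor of $v=0$ or the vanishing first-order $v$-derivative in a non-symmetric slot of $\Gamma'$ paired with a $v$. Hence $\dot W$ is the constant $-(\Gamma(x)(\eta_1,\eta_2)+\Gamma(x)(\eta_2,\eta_1))$. Since $v(0,x,w)=w$ depends only linearly on $w$, the initial condition $W(0)=0$ holds, yielding the stated formula for $v''$, and then integrating $\dot{U''}=W$ from $U''(0)=0$ gives the formula for $u''$.

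The main obstacle is purely notational: organizing the Leibniz expansion of $D^2F$ so that one sees at a glance which of its seven or eight terms vanish at $v=0$. Once this bookkeeping is set up carefully the computation is immediate. Smoothness of the flow on the domain specified and the validity of the chain rule for the parameter derivatives $u',v',u'',v''$ are standard facts from ODE theory \cite{Amann} and need no separate argument.
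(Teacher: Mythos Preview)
Your proposal is correct and follows exactly the approach the paper indicates: the paper's proof consists solely of the sentence ``By differentiating system \eqref{geodODE} one obtains the following,'' and you have carried out precisely that differentiation, supplying the details the paper omits. The bookkeeping you describe (linearizing once, then twice, and noting that at $w=0$ every term containing a factor of $v$ dies) is the expected computation.
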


Now fix $p \in U$ and set $x_0 \coleq \varphi(p)$. By continuity of $(u,v)$ we can find for all $r_1>0$ with $r_1 < \dist(x_0, \pd U')$ and $r_2 >0$ real numbers $r_3,r_4,r_5>0$ with $r_4<\dist(x_0,\pd U')$ such that \[ (u,v)(B_{r_3}(0) \times B_{r_4}(x_0) \times B_{r_5}(0)) \subseteq B_{r_1}(x_0) \times B_{r_2}(0) \subseteq U' \times \bR^n. \] Noting for all $a \ne 0$ and $t \in a^{-1} J(x,w)$ the identities
\[
 u(at, x, w) = u(t,x,aw)\textrm{ and } av(at, x, w) = v(t,x,aw)
\]
we see that $(u,v)(B_2(0) \times B_{r_4}(x_0) \times B_{r_3r_5/2}(0)) \subseteq B_{r_1}(x_0) \times B_{2r_2/r_3}(0)$ which means that these geodesics are defined for $\abso{t}<2$. Define now the mapping
\begin{align*}
F: B_{r_4}(x_0) \times B_{r_3r_5/2}(0) & \to B_{r_4}(x_0) \times B_{r_1}(x_0) \\
(x,w) &\mapsto (x, u(1,x,w))
\end{align*}
As seen from Lemma \ref{uvder} $F$ has regular derivative at $(x_0, 0)$ thus there is an open neighborhood $W \subseteq B_{r_4}(x_0) \times B_{r_3r_5/2}(0)$ of $(x_0,0)$ which is mapped diffeomorphically onto an open neighborhood $W_1 \subseteq B_{r_4}(x_0) \times B_{r_1}(x_0)$ of $(x_0,x_0)$. Choose an open neighborhood $W_2$ of $x_0$ with $W_2 \times W_2 \subseteq W_1$ and set  $U_1 \coleq F^{-1}(W_2 \times W_2)$. We have a diffeomorphism $F|_{U_1}: U_1 \to W_2 \times W_2$, which means that any two points $x,y \in W_2$ can be connected by a geodesic $\sigma(t,x,y) \coleq u(t,F^{-1}(x,y))$ which is unique in $B_{r_1}(x_0)$. The set $W_2$ can be chosen such that this geodesic is contained and unique in $W_2$ for all $x,y \in W_2$, i.e., $\varphi^{-1}(W_2)$ is convex; we will assume this to be the case without proof (which can be found for example in \cite[Chapter I Theorem 6.2]{Helgason}). Furthermore we remark that $W_2$ can be chosen arbitrarily small. We denote the initial direction of the geodesic from $x$ to $y$ by $w(x,y) \coleq (\pr_2 \circ (F|_{U_1})^{-1})(x,y)$ where $\pr_2$ is the projection on the second factor.
Parallel transport of a vector $\zeta \in \bR^n$ along $\sigma$ now is defined as the solution of the ODE system
\begin{equation}\label{ptODE}
\rho(0,x,y,\zeta) = \zeta,\quad \dot \rho(t,x,y,\zeta) = -\Gamma(\sigma(t,x,y))(\dot \sigma(t,x,y),\rho(t,x,y,\zeta))
 \end{equation}
which exists for all $t$ for which $\sigma$ is defined; this is linear in $\zeta$. We finally define the prospective transport operator locally as
\begin{align}\label{transpopdef}
a\in \Cinf(W_2 \times W_2, \Lin(\bR^n, \bR^n)),\quad a(x,y) \cdot \zeta \coleq \rho(1,x,y,\zeta)
\end{align}
which determines an element of $\Gamma(W_2, \TO(\TM, \TM))$ on the manifold.
Set $W_p \coleq \varphi^{-1}(W_2)$; performing the above construction for all $p \in M$ one obtains a transport operator on the set $W' \coleq \bigcup_{p \in M} (W_p \times W_p)$ which is an open neighborhood of the diagonal in $M \times M$. This transport operator is denoted by $A_{W'} \in \Gamma(W', \TO(\TM, TM))$ and locally given by $a$. Now the diagonal in $M \times M$ has a closed neighborhood $V$ contained in $W'$ and there is a smooth bump function $\chi \in \Cinf(M \times M)$ with $\chi=1$ on $V$ and $\supp \chi \subseteq W'$ which permits us to extend $A_{W'}$ to a globally defined transport operator $A \in \Gamma(M, \TO(\TM, \TM))$ given by $A(p,q)\coleq \chi(p,q) A_{W'}(p,q)$.

At this point a remark concerning the uniqueness and well-definedness of the construction is in order. Suppose on two charts we constructed local transport operators $a, \tilde a$ whose domains of definition on the manifold overlap. Then on the intersection of this domain they have to agree because of the local uniqueness of geodesics and because parallel transport is independent of the chart, thus $A_{W'}$ is well-defined.

Although $A$ depends on $V$ and $\chi$ it is unique near the diagonal in the sense that each point $p \in M$ has an open neighborhood $U$ (which can be chosen arbitrarily small) such that any two points in $U$ can be joined by a unique geodesic in this set and $A$ is given by parallel transport along these geodesics on $U$. We call $A$ associated to $\nabla$ as in the following definition.

\begin{definition}\label{def_transpop}A transport operator $A \in \Gamma(M, \TO(\TM, \TM))$ is said to be \emph{associated to a covariant derivative} $\nabla$ on $M$ if $A$ is given locally by parallel transport along geodesics with respect to $\nabla$, as in \eqref{transpopdef}.
\end{definition}

Acting on vectors $A$ is denoted by $A^1_0$. $A$ acts on covectors by the adjoint of its inverse: given a covector $\omega_p \in \tang_p^*(M)$ and a vector $v_q \in \tang_qM$ we set $((A^0_1(p,q)\omega_p) \cdot v_q \coleq \omega_q \cdot (A^1_0(p,q)v_p)$. $A$ extends in the usual way to a transport operator on the tensor bundle, i.e., for all $(r,s)$ we have $A^r_s \in \Gamma(\TO(\TrsM, \TrsM))$ given by $A^r_s = (A^1_0)^{\otimes r} \otimes (A^0_1)^{\otimes s}$. By setting $A^0_0 = \id$ we can say that $A$ commutes with tensor products, i.e., $A^{r+p}_{s+q}(s \otimes t) = A^r_s(s) \otimes A^p_q(t)$ for any tensors $s \in \cTrsM$, $t \in \cT^p_q(M)$ with $r,s,p,q \in \bN_0$. If clear from the context we omit the rank and simply write $A$ instead of $A^r_s$.

We will now calculate the derivatives of $A$ explicitly in a chart as we will need them later.
From \eqref{transpopdef} we see that derivatives of $a$ are obtained as derivatives of $\rho$. 
For the direction of differentiation we use arbitrary vectors $e = (\xi_1, \eta_1)$ and $f = (\xi_2, \eta_2) \in \bR^n \times \bR^n$. Then $\sigma$ and its derivatives are given by
\begin{equation}\label{sigmader}
 \begin{split}
\sigma(t,x,y) &= u(t,x,w(x,y)) \\
\sigma'(t,x,y)\cdot e &= u'(t,x,w(x,y) \cdot (\xi_1, w'(x,y) \cdot e) \\
\sigma''(t,x,y)\cdot(e,f) &= u'(t,x,w(x,y)) \cdot (0, w''(x,y) \cdot (e,f))\\
+ u''(t,x&,w(x,y)) \cdot ((\xi_1, w'(x,y) \cdot e), (\xi_2, w'(x,y) \cdot f ))
\end{split}
\end{equation}
and similarly for $\dot\sigma$ with $v$ in place of $u$. The mapping $w$ is given by the second component of the inverse of $G \coleq F|_{U_1}$ which is defined on $W_2 \times W_2$.
The derivative of $G^{-1}$ at $(x,x)$ is given by
\[ (G^{-1})'(x,x) = (G'(x,0))^{-1} = \left( \begin{array}{cc}
\id & 0 \\
-\id & \id \end{array} \right)
\]
and thus $w'(x,x)(\xi, \eta) = \eta - \xi$. Applying the chain rule to $(G^{-1} \circ G)''(x,w)=0$
results by Lemma \ref{uvder} in
\[ w''(x,x)\cdot ((\xi_1, \eta_1), (\xi_2, \eta_2)) = 1/2 \cdot (\Gamma(x)(\eta_1 - \xi_1, \eta_2 - \xi_2) + \Gamma(x)(\eta_2 - \xi_2, \eta_1 - \xi_1)). \]
Inserting this into \eqref{sigmader} we obtain the derivatives of $\sigma$:
\begin{gather*}
\sigma(t,x,x) = x,\quad \sigma'(t,x,x)(\xi, \eta) = \xi + t(\eta- \xi) \\
\sigma''(t,x,x)((\xi_1, \eta_1),(\xi_2, \eta_2)) = (t - t^2)/2 \cdot (\Gamma(x)(\eta_1 - \xi_1, \eta_2 - \xi_2) +\\ 
\Gamma(x)(\eta_2 - \xi_2, \eta_1 - \xi_1)) 
\end{gather*}
Now the derivatives of $\rho$ can be obtained by differentiating \eqref{ptODE}, giving first (omitting the arguments of $\rho$)
\begin{align*}
\rho'(0) \cdot e &= 0 \\
\dot \rho' \cdot e &= -(\Gamma'(\sigma) \cdot \sigma' \cdot e)(\dot \sigma, \rho) - \Gamma(\sigma) (\dot\sigma' \cdot e, \rho) - \Gamma(\sigma)(\dot\sigma, \rho' \cdot e)
 \\
\intertext{and then}
\rho''(0) \cdot (e,f) &= 0, \\
\dot\rho''\cdot (e,f) &= -(\Gamma''(\sigma)(\sigma' \cdot e, \sigma' \cdot f))(\dot \sigma, \rho) - (\Gamma'(\sigma) \cdot \sigma'' \cdot (e,f))(\dot \sigma, \rho ) \\
&\quad - (\Gamma'(\sigma) \cdot \sigma' \cdot e)(\dot\sigma' \cdot f, \rho) - (\Gamma'(\sigma) \cdot \sigma' \cdot e)(\dot\sigma, \rho' \cdot f) \\
&\quad - (\Gamma'(\sigma) \cdot \sigma' \cdot f)(\dot\sigma' \cdot e, \rho) - \Gamma(\sigma)(\dot\sigma'' \cdot (e,f), \rho) \\
&\quad - \Gamma(\sigma)(\dot\sigma' \cdot e, \rho' \cdot f) - (\Gamma'(\sigma) \cdot \sigma' \cdot f)(\dot\sigma, \rho' \cdot e) \\
&\quad - \Gamma(\sigma)(\dot\sigma' \cdot f, \rho' \cdot e) - \Gamma(\sigma)(\dot\sigma, \rho'' \cdot (e,f)).
\end{align*}

Solving these equations we finally obtain
\begin{align*}
\rho(t,x,x,\zeta) &= \zeta, \\
\rho'(t,x,x,\zeta)(\xi, \eta) &= - t \cdot \Gamma(x)(\eta-\xi, \zeta) \\
\rho''(t,x,x,\zeta) \cdot ((\xi_1, \eta_1),(\xi_2, \eta_2)) &= -(\Gamma'(x) \cdot (t\xi_1 + t^2(\eta_1 - \xi_1)/2))(\eta_2 - \xi_2, \zeta) \\
&\quad - (\Gamma'(x) \cdot (t \xi_2 + t^2 (\eta_2 - \xi_2)/2))(\eta_1 - \xi_1, \zeta) \\
&\quad- (t-t^2)/2 \cdot \Gamma(x)(\Gamma(x)(\eta_1 - \xi_1, \eta_2 - \xi_2),\zeta) \\
&\quad- (t-t^2)/2 \cdot \Gamma(x)(\Gamma(x)(\eta_2 - \xi_2, \eta_1 - \xi_1), \zeta) \\
&\quad + t^2/2 \cdot \Gamma(x)(\eta_1 - \xi_1, \Gamma(x)(\eta_2 - \xi_2, \zeta )) \\
&\quad + t^2/2 \cdot \Gamma(x)(\eta_2 - \xi_2, \Gamma(x)(\eta_1 - \xi_1, \zeta)).
\end{align*}

This holds for all $\abso{t}<2$ and $x,y$ in the open neighborhood $W_2$ of $x_0$. As $x_0$ was arbitrary in $U'$ we have shown the following.

\begin{lemma}\label{transpop_abl}Let $(U, \varphi)$ be an arbitrary chart on $M$. Then the local representation $a \in \Cinf(\varphi(U) \times \varphi(U), \Lin(\bR^n, \bR^n))$ of a transport operator $A$ associated to $\nabla$ satisfies the following identities for all $x \in \varphi(U)$ and $\xi, \eta, \zeta \in \bR^n$:
\begin{align*}
\textrm{(i) }& a(x,x) = \id, \\
\textrm{(ii) }& a'(x,x)(\xi, \eta) \cdot \zeta = -\Gamma(x)(\eta-\xi, \zeta), \\
\textrm{(iii) }& 2 a''(x,x)((\xi_1, \eta_1), (\xi_2, \eta_2))\cdot \zeta = -(\Gamma'(x) \cdot (\eta_1 + \xi_1))(\eta_2 - \xi_2, \zeta) \\
&\quad - (\Gamma'(x) \cdot (\eta_2 + \xi_2))(\eta_1 - \xi_1, \zeta) + \Gamma(x)(\eta_1 - \xi_1, \Gamma(x)(\eta_2 - \xi_2, \zeta)) \\
&\quad + \Gamma(x)(\eta_2 - \xi_2, \Gamma(x)(\eta_1 - \xi_1, \zeta)).
\end{align*}
\end{lemma}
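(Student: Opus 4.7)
The plan is to exploit the fact that by definition \eqref{transpopdef} we have $a(x,y)\cdot\zeta = \rho(1,x,y,\zeta)$, so the three identities claimed are nothing but the evaluations of $\rho$, $\rho'$ and $\rho''$ at the diagonal $y=x$ and at $t=1$. All three of these expressions have in fact been obtained in the paragraphs preceding the statement, so the lemma reduces to a final specialization step at $t=1$.

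Identity (i) follows immediately from $\sigma(t,x,x)=x$ and $\dot\sigma(t,x,x)=0$: the ODE \eqref{ptODE} degenerates to $\dot\rho = 0$ with initial value $\zeta$, giving $\rho(1,x,x,\zeta)=\zeta$. Identity (ii) is then the specialization at $t=1$ of the formula $\rho'(t,x,x,\zeta)(\xi,\eta) = -t\,\Gamma(x)(\eta-\xi,\zeta)$. For (iii) I would set $t=1$ in the displayed formula for $\rho''(t,x,x,\zeta)$: the two terms proportional to $(t-t^2)/2$ vanish, the factors $t\xi_i + t^2(\eta_i - \xi_i)/2$ collapse to $(\xi_i + \eta_i)/2$, and the remaining $t^2/2$ becomes $1/2$. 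After multiplying through by $2$ one obtains exactly the displayed right-hand side.

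The actual work lies in establishing the formulas for $\rho'$ and $\rho''$ used above, and this I would carry out by differentiating the parallel transport ODE \eqref{ptODE} in the space variables once and then twice, and solving the resulting linear ODEs in $t$ with zero initial data. The coefficients involve derivatives of $\sigma$ and $\dot\sigma$, which via the chain rule \eqref{sigmader} reduce to derivatives of $u$, $v$ (given by Lemma \ref{uvder}) and of $w = \pr_2 \circ G^{-1}$; the latter are obtained by applying the inverse function theorem to $G = F|_{U_1}$ to compute $(G^{-1})'(x,x)$ and then differentiating $G^{-1}\circ G = \id$ once more to extract $w''$. I expect the main obstacle to be the bookkeeping in the second-order step: differentiating the nonlinear right-hand side of \eqref{ptODE} twice produces roughly ten coupled terms, and one must use the simplifications $\dot\sigma(t,x,x)=0$ and $\rho'(0,x,x,\zeta)=0$ to prevent them from blowing up in complexity before integrating the resulting linear ODE with its polynomial-in-$t$ forcing coming from Lemma \ref{uvder}.
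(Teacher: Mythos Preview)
Your proposal is correct and follows exactly the approach of the paper: the computations of $\rho$, $\rho'$, $\rho''$ at the diagonal via differentiating the parallel transport ODE \eqref{ptODE}, with the $\sigma$-derivatives obtained from Lemma \ref{uvder} and the inverse function theorem applied to $G$, are carried out in the paragraphs immediately preceding the lemma, and the lemma itself is then just the specialization at $t=1$ that you describe. Your verification of how the $t=1$ substitution collapses the $\rho''$ formula to (iii) is accurate.
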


Finally we recall that the pullback $(\mu, \nu)^*A \in \Gamma(\TO(\tang N, \tang N))$ of a transport operator $A \in \Gamma(\TO(\TM, \TM))$ along a pair of diffeomorphisms $\mu, \nu: N \to M$ is given by
\begin{equation*}
((\mu, \nu)^* A)(p,q) \coleq (\tang_q\nu)^{-1} \cdot A(\mu(p), \nu(q)) \cdot \tang_p \mu
\end{equation*}
and its Lie derivative $\Lie_{X \times Y}A \in \Gamma(\TO(\TM, \TM))$ along a pair of vector fields $X,Y \in \fX(M)$ by
\begin{equation}\label{aprilbeta}
(\Lie_{X \times Y} A)(p,q) \coleq \left.\frac{\ud}{\ud \tau}\right|_{\tau=0} ((\Fl^X_\tau, \Fl^Y_\tau)^* A)(p,q)
\end{equation}
We abbreviate $\Lie_{X \times X}A$ by $\Lie_XA$. By Lemma \ref{transpop_abl} (i) $\LX A$ can not be associated to any covariant derivative on $M$. See \cite[Appendix A]{global2} for further details about transport operators.
We finally note that trivially, the restriction of a transport operator associated to a covariant derivative is associated to the restriction of the covariant derivate.

\section{Pullback and Lie derivatives}\label{sec_pullback}

In this section we will define pullback along a diffeomorphism and Lie derivatives of generalized tensor fields. Let $M,N$ be oriented manifolds.

\begin{definition}\label{defpb}Let $\mu\colon M \to N$ be an orientation preserving diffeomorphism and $R \in \hers(N)$. The map $\mu^*R \in \hersM$ defined by $(\mu^*R)(\omega) \coleq \mu^* ( R ( \mu_* \omega))$ for $\omega \in \ub0M$ is called the \emph{pullback} of $R$ along $\mu$.
\end{definition}

\begin{remark}Essentially, this is the only sensible definition in our context; in fact, assuming that $\mu^*: \hers(N) \to \hersM$ commutes with contractions one can by Proposition \ref{isos} contract with dual smooth tensor fields which reduces everything to the choice of the pullback of scalar fields. The latter we naturally (or for consistency with the scalar case of \cite{global}) assume to be given by $\mu^*: \hat\cE^0_0(N) \to \hat\cE^0_0(M)$, $(\mu^*F)(\omega) \coleq F(\mu_*\omega) \circ \mu$.
\end{remark}

\begin{lemma}The map $\mu^*\colon \hers(N) \to \hers(M)$ of Definition \ref{defpb} preserves moderateness and negligibility and thus defines a map $\mu^*\colon \hgrs(N) \to \hgrs(M)$.
\end{lemma}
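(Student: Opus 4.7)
The plan is to translate the moderateness and negligibility tests on $M$ into equivalent tests on $N$ via $\mu$, so that the corresponding properties of $R$ itself can be invoked directly. Given $K \csub M$, vector fields $X_1,\dotsc,X_l \in \fX(M)$, a Riemannian metric $g$ on $M$, and a smoothing kernel $\Phi \in \sk 0 M$ (respectively $\Phi \in \sk kM$ for negligibility), I would set $K' \coleq \mu(K) \csub N$, $Y_i \coleq \mu_*X_i \in \fX(N)$, $h \coleq \mu_*g$, and $\Psi \coleq \mu_*\Phi$. By Proposition \ref{lahmah} applied to $\mu^{-1}$ one has $\Psi \in \sk 0N$ (respectively $\sk kN$).

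The technical heart of the argument is the pointwise identity
\[
 [(\mu^*R)(\Phi(\e,p))](p) = (\mu^* G_\e)(p), \qquad p \in M,
\]
where $G_\e \in \cTrsN$ is the tensor field on $N$ defined by $G_\e(q) \coleq R(\Psi(\e,q))(q)$. This should follow by unwinding the defining relation $(\mu^*R)(\omega) = \mu^*(R(\mu_*\omega))$, substituting $\mu_*\Phi(\e,p) = \Psi(\e,\mu(p))$, and then using the pointwise definition of tensor-field pullback via the tangent map $\tang_p\mu$ to reassemble the expression as a single pulled-back tensor field.

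With this identification in hand, iterated application of the naturality relation $\Lie_X(\mu^*t) = \mu^*(\Lie_{\mu_*X}t)$ valid for $t \in \cTrsN$ produces
\[
 \Lie_{X_1}\dotsm\Lie_{X_l}\bigl[(\mu^*R)(\Phi(\e,p))(p)\bigr] = \bigl(\mu^*(\Lie_{Y_1}\dotsm \Lie_{Y_l}G_\e)\bigr)(p).
\]
Lemma \ref{lama}(i) then dominates the $g$-norm of the left-hand side at $p \in K$ by a constant multiple (depending only on $K$, $\mu$, $g$, $h$, but not on $\e$) of the $h$-norm of $(\Lie_{Y_1}\dotsm\Lie_{Y_l}G_\e)(\mu(p))$. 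Invoking the moderateness (respectively negligibility) test for $R$ at $K'$ with vector fields $Y_i$ and smoothing kernel $\Psi$ now yields the desired $O(\e^{-N})$ (respectively $O(\e^m)$ for every $m \in \bN_0$) bound uniformly for $p \in K$.

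The main obstacle I anticipate is the bookkeeping in the key pointwise identity: the expression $(\mu^*R)(\Phi(\e,p))(p)$ depends on $p$ simultaneously through the smoothing-kernel slot and through the evaluation point, and one has to recognise this coupled dependence as a single pulled-back tensor field $\mu^*G_\e$ on $M$. Once this is verified, the standard naturality of the Lie derivative under diffeomorphism pullback takes care of all higher derivatives at once, without any need to disentangle the two roles of $p$ by hand.
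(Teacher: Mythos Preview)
Your proposal is correct and follows essentially the same route as the paper: identify $(\mu^*R)(\Phi(\e,p))(p)$ as $(\mu^*G_\e)(p)$ for the tensor field $G_\e(q)=R((\mu_*\Phi)(\e,q))(q)$ on $N$, use naturality of the Lie derivative under pullback, invoke Proposition~\ref{lahmah} for $\mu_*\Phi\in\sk kN$, and conclude via the norm comparison of Lemma~\ref{lama}. The only discrepancy is that the paper cites Lemma~\ref{lama}(ii) at the final step, whereas you cite part~(i); since the objects whose norms are being compared are $(r,s)$-tensor fields and not $n$-forms, your reference to~(i) is the appropriate one.
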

\begin{proof}
Given $R \in \hers(N)$ and $\Phi \in \sk kM$, by Definition \ref{basedef} moderateness and negligibility of $\mu^*R$ are established by evaluating Lie derivatives of the tensor field $t \in \cTrsM$ defined by
\[ t(p) \coleq (\mu^*R)(\Phi(\e,p))(p) = \mu^*(R(\mu_*(\Phi(\e,p))))(p) \]
on a compact set $K \csub M$. Given an arbitrary vector field $X \in \fX(M)$, $\Lie_Xt$ is $\mu^*(\Lie_{\mu_*X}\mu_*t)$, where $(\mu_*t)(p) = R(\mu_*(\Phi(\e,\mu^{-1}(p))))(p) = R((\mu_*\Phi)(\e,p))(p)$.
By Proposition \ref{lahmah} $\mu_*\Phi$ is in $\sk kN$, thus the growth conditions on $\Lie_Xt$ (and similarly for any number of Lie derivatives) are obtained directly from those of $R$ with help of Lemma \ref{lama} (ii).
\end{proof}

Given $R \in \hers(M)$ we can define its Lie derivative $\LX R \in \hers(M)$ along a complete vector field $X \in \fX(M)$ in a geometric manner via its flow, namely as $(\LX R)(\omega) \coleq \frac{\ud}{\ud t}|_{t=0} ((\Fl^X_t)^*R)(\omega)$ for $\omega \in \ub0M$. By the chain rule this is seen to be equal to $-\ud R(\omega)(\Lie_X\omega) + \Lie_X(R(\omega))$ (see \cite[Section 6]{global2} for the smoothness argument). Thus the Lie derivative is formally the same as for elements of $\hgM$ (\cite[Definition 3.8]{global}).
For non-complete vector fields we use this formula for defining the Lie derivative.

\begin{definition}For $X \in \fX(M)$ we define the Lie derivative $\LX R$ of $R \in \hersM$ as $(\LX R)(\omega) \coleq -\ud R(\omega)(\Lie_X\omega) + \Lie_X(R(\omega))$.
\end{definition}

\begin{lemma}\label{prodrule}The Lie derivative $\LX: \hersM \to \hersM$ commutes with the tensor product and with contractions.
\end{lemma}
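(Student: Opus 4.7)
The plan is to unfold the definition of $\LX R$ and rely on two classical facts: the product (Leibniz) rule for the differential $\ud$ in convenient calculus, and the Leibniz rule / contraction-commutativity for the classical Lie derivative on smooth tensor fields.

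First, for the tensor product, I would use Proposition \ref{isos} to observe that for any $R\in\hersM$ and $R'\in\hat\cE^{r'}_{s'}(M)$ one has the pointwise description $(R\otimes R')(\omega) = R(\omega)\otimes R'(\omega)$ in $\cT^{r+r'}_{s+s'}(M)$; indeed on factorizable elements $R=F\otimes t$, $R'=F'\otimes t'$ this reads $(FF')\otimes(t\otimes t')$ evaluated at $\omega$, which matches $R(\omega)\otimes R'(\omega)$, and $\CinfM$-bilinearity extends this to all of $\heM\otimes_\CinfM\cTrsM$. Plugging into the definition of $\LX$ I would then write
\begin{align*}
 \LX(R\otimes R')(\omega) &= -\ud(R\otimes R')(\omega)(\LX\omega) + \LX\bigl(R(\omega)\otimes R'(\omega)\bigr).
\end{align*}
The first summand splits via the Leibniz rule for the differential of the smooth bilinear map $\otimes\colon \cTrsM\times\cT^{r'}_{s'}(M)\to\cT^{r+r'}_{s+s'}(M)$ (cf.\ \cite{KM}) into $\ud R(\omega)(\LX\omega)\otimes R'(\omega) + R(\omega)\otimes \ud R'(\omega)(\LX\omega)$, and the second via the classical Leibniz rule for $\LX$ on smooth tensor fields into $\LX R(\omega)\otimes R'(\omega) + R(\omega)\otimes \LX R'(\omega)$. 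Regrouping yields exactly $((\LX R)\otimes R' + R\otimes(\LX R'))(\omega)$.

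For contractions, note that any contraction $C\colon\cTrsM\to\cT^{r-1}_{s-1}(M)$ is $\CinfM$-linear, hence under the identification of Proposition \ref{isos} it induces a map on the basic spaces by $(CR)(\omega)\coleq C(R(\omega))$. Because $C$ is linear and continuous, its differential in $\omega$ is itself, so $\ud(CR)(\omega)(\LX\omega) = C(\ud R(\omega)(\LX\omega))$; combined with the commutation of the classical Lie derivative with contractions one gets
\begin{align*}
 \LX(CR)(\omega) &= -C(\ud R(\omega)(\LX\omega)) + \LX\bigl(C(R(\omega))\bigr) \\
 &= C\bigl(-\ud R(\omega)(\LX\omega) + \LX R(\omega)\bigr) = C(\LX R)(\omega).
\end{align*}

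There is no real obstacle here beyond bookkeeping; the only point requiring a moment of care is the identification of the differential of $R\otimes R'$ (resp.\ of $CR$) with the expected expression, which is purely a consequence of the chain and product rules in convenient calculus applied to the smooth bilinear pairing $\otimes$ (resp.\ the continuous linear map $C$).
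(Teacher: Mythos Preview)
Your proof is correct and follows essentially the same strategy as the paper: reduce $\LX$ on the basic space to the product rule for $\ud$ together with the classical Leibniz rule and contraction-commutativity for $\LX$ on smooth tensor fields. The differences are organizational rather than substantive. The paper works via the factorization of Proposition~\ref{isos}, first establishing $\LX(F\otimes t)=\LX F\otimes t + F\otimes\LX t$ on simple tensors and deducing the Leibniz rule for $R\otimes R'$ from that; you instead use the pointwise identity $(R\otimes R')(\omega)=R(\omega)\otimes R'(\omega)$ and differentiate it directly. For contractions the paper only spells out the pairing $R\cdot v$ with a smooth dual field (which is what is actually needed downstream), whereas you treat an arbitrary contraction map $C$; both are immediate from linearity and the classical fact $\LX\circ C=C\circ\LX$. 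Your route is marginally more streamlined in that it avoids passing through simple tensors, at the cost of invoking the chain rule for $\ud$ on a bilinear map in convenient calculus explicitly; the paper's route keeps that step hidden inside the ``direct calculation'' on $F\otimes t$.
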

\begin{proof}
Let $R = F \otimes t \in \hersM \cong \heM \otimes \cTrsM$ and $R' = F' \otimes t'$. A direct calculation shows that $\LX R = \LX F \otimes t + F \otimes \LX t$ and consequently $\LX(R \otimes R') = \LX R \otimes R' + R \otimes \LX R'$. Contracting with a smooth dual tensor field $v$ the general case follows from
\begin{align*}
 \LX(R \cdot v)(\omega) &= \LX((R \cdot v)(\omega)) - \ud (R \cdot v)(\omega)(\LX \omega) \\
&= \LX(R(\omega) \cdot v) - \ud R(\omega)(\LX \omega) \cdot v \\
&= \LX (R(\omega)) \cdot v + R(\omega) \cdot \LX v - \ud R(\omega)(\LX \omega) \cdot v \\
&= ((\LX R)\cdot v + R \cdot \LX v)(\omega).
\end{align*}
\end{proof}
\begin{corollary}$\LX\colon \hers(M) \to \hers(M)$ preserves moderateness and negligibility.
\end{corollary}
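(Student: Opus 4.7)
My plan is to apply the explicit formula $(\LX R)(\omega) = \Lie_X(R(\omega)) - \ud R(\omega)(\Lie_X\omega)$ and verify moderateness and negligibility for the two summands separately. Throughout, fix a Riemannian metric $g$, a compact $K\csub M$, test fields $X_1,\ldots,X_l\in\fX(M)$, and a smoothing kernel $\Phi\in\sk 0M$ (resp.\ $\sk kM$).

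First I would dispose of $\omega\mapsto \Lie_X(R(\omega))$: since ordinary Lie derivatives compose, the test expression equals $\Lie_{X_1}\cdots\Lie_{X_l}\Lie_X(R(\Phi(\e,p)))(p)$, which is precisely $R$ tested against $\Phi$ with one additional Lie derivative $\Lie_X$, so the required growth bounds transfer immediately from the hypotheses on $R$.

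The delicate summand is $\omega\mapsto \ud R(\omega)(\Lie_X\omega)$: the naive estimate $\norm{\Lie_X\Phi(\e,p)}_g = O(\e^{-n-1})$ would lose decay, so I would employ the smoothing-kernel trick from the scalar theory of \cite{global}, splitting
\[
\Lie_X\Phi = (\Lie_X + \Lie_X')\Phi - \Lie_X'\Phi.
\]
Condition (ii) in the definition of a smoothing kernel provides the improved bound $O(\e^{-n})$ on $(\Lie_X+\Lie_X')\Phi$, while $\Lie_X'\Phi$ is a footpoint derivative. By the chain rule these two pieces give
\[
\ud R(\Phi(\e,p))(\Lie_X'\Phi(\e,p)) = \left.\tfrac{\ud}{\ud\tau}\right|_{\tau=0} R\bigl(\Phi(\e,\Fl^X_\tau p)\bigr)
\]
and
\[
\ud R(\Phi(\e,p))\bigl((\Lie_X+\Lie_X')\Phi(\e,p)\bigr) = \left.\tfrac{\ud}{\ud\tau}\right|_{\tau=0} R\bigl((\Fl^X_\tau)^*\Phi(\e,\cdot)\bigr)(p);
\]
by Proposition \ref{lahmah} the pullback family $(\Fl^X_\tau)^*\Phi$ is a smoothing kernel with bounds uniform in $\tau$ near $0$, so the tests for $R$ apply and may be differentiated in $\tau$.

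The hard part is the combinatorial bookkeeping that arises when iterating with the outer $\Lie_{X_1}\cdots\Lie_{X_l}$: chain-rule expansions produce cross-terms mixing spatial and kernel-slot differentiations, each of which must be recast as a legitimate test of $R$. I plan to sidestep this via a reduction to the scalar case. By Lemma \ref{tricklein} one may localize to a chart $(U,\varphi)$ with $K\csub U$, and by Proposition \ref{isos} any $R|_U$ decomposes as a finite sum $\sum_\alpha F_\alpha\otimes e_\alpha$ in a smooth local tensor frame $(e_\alpha)$ with scalar coefficients $F_\alpha\in\hat\cE(U)$. Since moderateness resp.\ negligibility of $R$ on $K$ is equivalent, via equivalence of tensor norms in local frames over compacta, to that of each $F_\alpha$, Lemma \ref{prodrule} yields
\[
\LX R|_U = \sum_\alpha \bigl((\LX F_\alpha)\otimes e_\alpha + F_\alpha\otimes (\LX e_\alpha)\bigr),
\]
whose second summand is moderate resp.\ negligible because $\LX e_\alpha$ is smooth. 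This reduces the statement to the scalar assertion that $\LX\colon \hat\cE(U)\to\hat\cE(U)$ preserves moderateness and negligibility, which is the case treated in \cite{global}.
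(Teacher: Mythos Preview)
Your final argument is correct and is essentially the paper's own proof: reduce to the scalar case via Proposition~\ref{isos} and the Leibniz rule of Lemma~\ref{prodrule}, then invoke the scalar result from \cite{global}. The paper does this slightly more directly by contracting globally with an arbitrary $t\in\cTsrM$ rather than localizing to a chart and picking a frame: since $R$ is moderate (resp.\ negligible) iff $R\cdot t\in\heM$ is so for every $t\in\cTsrM$, the identity $(\LX R)\cdot t = \LX(R\cdot t) - R\cdot\LX t$ from Lemma~\ref{prodrule} lands you in the scalar algebra in one line, with no need for Lemma~\ref{tricklein} or coordinate frames. Your first three paragraphs---the attempted direct estimate via the $(\Lie_X+\Lie_X')$ splitting and flow-pullback trick---are superfluous once you commit to the scalar reduction, and you could simply omit them.
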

\begin{proof}By Proposition \ref{isos} we know that $R \in \hers(M)$ is moderate resp.~negligible if and only if $R \cdot t$ is moderate resp.~negligible for all $t \in \cTsrM$. By Lemma \ref{prodrule} $(\LX R)\cdot t = \LX(R \cdot t) - R \cdot \Lie_Xt$, so the claim follows because $\LX\colon \heM \to \heM$ preserves moderateness and negligibility (\cite[Theorem 4.6]{global}).
\end{proof}

It follows that $\LX$ is also defined on $\GhrsM$.

\section{Embedding of distributional tensor fields}\label{sec_embed}

Using a transport operator we can approximate a locally integrable $(r,s)$-tensor field $t$ at a point $p \in M$ by $t(p) \sim \int A^r_s(q,p)t(q)\omega(q)\,\ud q$, where $\omega \in \ub0M$ has support in a small ball around $p$. This approximation is valid if $A(q,q)$ is the identity for all $q$ in a neighborhood of $p \in M$, see Proposition \ref{regapprox} below for a precise statement. In order to obtain a distributional formula which we can use for the embedding we examine the action of $t$ on a dual tensor field $u$ of rank $(s,r)$:
\begin{align*}
t(p) \cdot u(p) & \sim \int ( A^r_s(q,p)t(q) \cdot u(p) ) \omega(q)\,\ud q\\
& = \int ( t(q) \cdot A^s_r(p,q)u(p) ) \omega(q)\,\ud q\\
& = \langle t(q), A^s_r(p, q)u(p) \otimes \omega(q) \rangle.
\end{align*}

These considerations lead to the following definition of an embedding of $\DprsM$ into $\GhrsM$.

\begin{definition}Let $M$ be an oriented manifold with covariant derivative $\nabla$ and $A \in \Gamma(\TO(\TM, TM))$ a transport operator associated to $\nabla$. Then we define an embedding $\iors\colon \DprsM \to \hersM$ by setting
\[ ((\iors)(\omega) \cdot v)(p) \coleq \langle t, A(p, \cdot)v(p) \otimes \omega \rangle \]
where $t \in \DprsM$, $\omega \in \ub0M$, $v \in \cTsrM$, and $p \in M$.
\end{definition}

If the rank is clear from the context we simply write $\iota$ instead of $\iors$; this way we may index the embedding by the covariant derivative used, as in $\iota_\nabla$. It is easily seen that $\iota$ is a presheaf morphism, i.e., commutes with restriction (one can show that $\GhrsM$ is a fine sheaf). As seen from the next Lemma in the case $\mu=\id$ two transport operators associated to the same covariant derivative give the same embedding into the quotient.

We recall that given a diffeomorphism $\mu: M \to N$ one can define the pullback of a covariant derivative $\nabla$ on $N$ by $(\mu^*\nabla)_X Y \coleq \mu^*(\nabla_{\mu_*X}\mu_*Y)$ for $X,Y \in \fX(M)$, which gives a covariant derivative $\mu^*\nabla$ on $M$.

\begin{lemma}\label{isocomm}Let $\mu: M \to N$ be an orientation preserving diffeomorphism and suppose there are covariant derivatives $\tilde\nabla$ on $M$ and $\nabla$ on $N$. Let the embedding $\iota$ on $M$ resp. $N$ use any transport operator associated to $\tilde \nabla$ resp. $\nabla$.
Then $\iota_{\tilde \nabla} \circ \mu^* - \mu^* \circ \iota_\nabla$ has values in $\NhrsM$ if and only if $\tilde \nabla = \mu^* \nabla$.
\end{lemma}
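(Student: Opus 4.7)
The plan is to reduce the question to a comparison of the two transport operators themselves and then exploit that smoothing kernels concentrate at the diagonal at scale $\e$.

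First, I would unwind the definitions. For $t\in\DprsN$, $\omega\in\ub 0 M$, $v\in\cTsrM$ and $p\in M$ one has
\begin{align*}
(\iota_{\tilde\nabla}\mu^*t)(\omega)(p)\cdot v(p) &= \langle\mu^*t,\tilde A(p,\cdot)v(p)\otimes\omega\rangle,\\
(\mu^*\iota_\nabla t)(\omega)(p)\cdot v(p) &= \langle t,A(\mu(p),\cdot)(\tang\mu)^s_r v(p)\otimes\mu_*\omega\rangle.
\end{align*}
Using $\langle\mu^*t,\xi\rangle=\langle t,\mu_*\xi\rangle$ together with naturality of push-forward for tensor fields and $n$-forms, the difference of the two embeddings evaluated on $t$ becomes a pairing of $t$ with a smooth test section whose tensor part is controlled by
\[
B(p,q)\coleq\tilde A(p,q)-((\mu,\mu)^*A)(p,q)\in\Lin((\cTsrM)_p,(\cTsrM)_q).
\]
Since both transport operators equal the identity on the diagonal, $B(p,p)=0$. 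Moreover, by naturality of parallel transport the pullback $(\mu,\mu)^*A$ is a transport operator associated to $\mu^*\nabla$; hence $\tilde\nabla=\mu^*\nabla$ is equivalent to $\tilde A$ and $(\mu,\mu)^*A$ being associated to the same connection.

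For the \emph{if} direction, assume $\tilde\nabla=\mu^*\nabla$. By the uniqueness remark following Definition \ref{def_transpop}, $\tilde A$ and $(\mu,\mu)^*A$ coincide on some open neighborhood $W\subseteq M\times M$ of the diagonal, so $B\equiv 0$ on $W$. Fix $K\csub M$; by Lemma \ref{tricklein} it suffices to test with $\Phi\in\sk 0 U$ for an open $U\supseteq K$ satisfying $U\times U\subseteq W$. Property (i) of smoothing kernels provides $\e_0,C>0$ with $\supp\Phi(\e,p)\subseteq U$ for all $p\in K$ and $\e\leq\e_0$, so that the test section $B(p,\cdot)v(p)\otimes\mu_*\Phi(\e,p)$ is identically zero, and the pairing with $t$ vanishes for small $\e$. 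Lie derivatives in $p$ only shift derivatives onto $B$, $v$ and $\Phi$ while preserving the support condition, so the same conclusion holds after any number of Lie derivatives; this is strictly stronger than negligibility.

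For the \emph{only-if} direction, suppose $\tilde\nabla\neq\mu^*\nabla$ at some point $p_0\in M$. By Lemma \ref{transpop_abl}(ii), in a chart $(U,\varphi)$ around $p_0$ the first derivative at the diagonal of the local representation of $\tilde A$ differs from that of $(\mu,\mu)^*A$, with the discrepancy governed by $\Delta\Gamma\coleq\Gamma_{\tilde\nabla}(x_0)-\Gamma_{\mu^*\nabla}(x_0)\neq 0$ for $x_0\coleq\varphi(p_0)$. I would test against a suitable tensor distribution, for instance $t=\delta_{\mu(p_0)}\otimes T$ (with a fixed local volume form defining $\delta$), where $T$ is smooth near $\mu(p_0)$ and $v_0\in(\cTsrM)_{p_0}$ are chosen so that the contraction involving $\Delta\Gamma$ yields a non-zero expression; this uses $(r,s)\neq 0$. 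In the chart, $\Phi(\e,p)(q)\sim\e^{-n}\phi_0((q-p)/\e)$ while $B(p,p_0)=O(\abso{p-p_0})=O(\e)$ for $p=p_0+O(\e)$, so the pairing at such $p$ scales like $\e^{1-n}$ with non-zero coefficient determined by $\Delta\Gamma$. Since $\Phi(\e,p)(p_0)$ carries a factor of $\e^{-n}$ independently of which $\sk k M$ the kernel is chosen from, the supremum over $p$ in a compact neighborhood of $p_0$ fails to be $O(\e^m)$ for $m\geq 2-n$, contradicting negligibility.

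The main obstacle is the only-if direction: one must interpret the tensor-valued Dirac distribution coherently and verify that with a proper choice of $v_0$, $T$ and mollifier profile $\phi_0$ the leading $\e^{1-n}$ coefficient is actually non-zero, using the $(r,s)\neq 0$ hypothesis via the bilinear form $\Delta\Gamma$ from Lemma \ref{transpop_abl}. The if direction, by contrast, is a clean support argument once the uniqueness of transport operators near the diagonal is invoked.
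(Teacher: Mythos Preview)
Your ``if'' direction coincides with the paper's: both observe that when $\tilde\nabla=\mu^*\nabla$ the transport operators $\tilde A$ and $(\mu,\mu)^*A$ agree on a neighborhood of the diagonal (since $\mu$ carries geodesics and parallel transport of $\nabla$ to those of $\mu^*\nabla$), so the difference vanishes identically once the support of $\Phi(\e,p)$ has shrunk into that neighborhood.

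For the ``only if'' direction you take a genuinely different route. The paper, after the same reduction to comparing $\iota_{\tilde\nabla}$ with $\iota_{\mu^*\nabla}$, defers to Theorem~\ref{nogo}(i). There the assumption is rewritten as negligibility of $(\omega,p)\mapsto\langle T,Z(p,\cdot)\otimes\omega\rangle$ for all $T$, and one tests at a \emph{single point} $K=\{x\}$ against a principal-value type distribution $\delta\otimes\dotsb\otimes\chi(t)\,\sign(t)|t|^{n-2}\otimes\dotsb\otimes\delta$ together with a radial test function whose vanishing moments kill every Taylor term except the first, isolating $\partial_2 Z(x,x)$ (Proposition~\ref{trick1}, Corollary~\ref{obenabl}); Lemma~\ref{transpop_abl} then identifies this derivative with the difference of Christoffel symbols. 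Your argument instead uses a plain Dirac $\delta_{p_0}$ and extracts the first derivative by \emph{varying the evaluation point} $p\in K$: at $p=p_0+\e\xi_0$ the pairing produces $T\cdot\Delta\Gamma(x_0)(\xi_0,v_0)\cdot\phi_0(-\xi_0)\,\e^{1-n}+O(\e^{2-n})$, so the supremum over $K$ is bounded below by a positive multiple of $\e^{1-n}$. This is more elementary and bypasses the moment-cancellation machinery entirely; the trade-off is that it relies essentially on the supremum over $p$ in the negligibility test, whereas the paper's method works pointwise in $p$ and feeds directly into the general framework of Corollary~\ref{obenabl}, which is reused for part (ii) of Theorem~\ref{nogo}. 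One phrasing to tighten: rather than ``independently of which $\sk kM$ the kernel is chosen from'', what you actually need (and have) is that for \emph{each} $k$ one can construct some $\Phi\in\sk kM$ locally of the form $\e^{-n}\phi_0((q-p)/\e)$ with $\phi_0\in\lub k{\bR^n}$ and $\phi_0(-\xi_0)\neq 0$; exhibiting one failing kernel per $k$ is exactly the negation of the negligibility quantifiers.
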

\begin{proof}We first assume that $\tilde \nabla = \mu^*\nabla$. Fix $K \csub M$ for testing. We may assume that $K$ is contained in an open convex set $U_0$. Let $L$ be a compact neighborhood of $K$ in $U_0$. Given $\Phi \in \sk 0M$, there exists $\e_0>0$ such that $\Phi(\e,p)$ has support in $U_0$ for all $\e<\e_0$ and $p \in L$. Now let $\tilde A$ and $A$ denote any transport operators associated to $\tilde\nabla$ and $\nabla$, respectively. We then claim that for all $p \in L$, $t \in \DprsN$, $v \in \cTsrM$, and $\omega \in \ub0M$ with support in $U_0$ the expression
\begin{align*}
(\iota_{\tilde \nabla} ( \mu^* t)(\omega) \cdot v)(p) &= \langle \mu^*t, \tilde A(p, \cdot) v(p) \otimes \omega \rangle  = \langle t, \mu_*(\tilde A(p, \cdot) v(p)) \otimes \mu_* \omega \rangle\\
\intertext{equals}
(\mu^* ( \iota_\nabla t)(\omega) \cdot v)(p) &= (\mu^*((\iota t)(\mu_* \omega)) \cdot v)(p) = \mu^* ( (\iota t)(\mu_* \omega) \cdot \mu_* v)(p) \\
& = ((\iota t) (\mu_* \omega) \cdot \mu_* v)(\mu (p)) = \langle t, A(\mu (p), \cdot) \mu_* v (\mu (p)) \otimes \mu_* \omega \rangle.
\end{align*}
These expressions are equal if
$\mu_*(\tilde A(p, \cdot)v(p))(\mu (q)) = A(\mu (p), \mu (q)) (\mu_* v)(\mu (p))$
 for $q \in U_0$. But this is clear in the case $\tilde \nabla = \mu^*\nabla$ because then $\mu$ preserves geodesics, convex sets, and parallel displacement.

For the converse, by writing $\iota_{\tilde \nabla} \circ \mu^* - \mu^* \circ \iota_\nabla = (\iota_{\tilde \nabla} - \iota_{\mu^*\nabla} ) \circ \mu^* + \iota_{\mu^*\nabla}\circ \mu^* - \mu^* \circ \iota_{\nabla}$ and because $\mu^*: \hers(N) \to \hers(M)$ is bijective we only have to show that $\iota_{\tilde \nabla} - \iota_{\mu^*\nabla} \subseteq \hnrs(M)$ implies $\tilde \nabla = \mu^*\nabla$, which will be accomplished by Theorem \ref{nogo} below.
\end{proof}

Because homotheties preserve Levi-Civita connections (\cite[Chapter 3]{ONeill}) we immediately obtain the following (cf.~\cite[Propositions 6.6 and 6.8]{global2}).

\begin{corollary}\label{isokill}
 If $\mu$ is a homothety between Riemannian manifolds then $\iors \circ \mu^* = \mu^* \circ \iors$ has negligible values, where the embeddings use transport operators associated to the Levi-Civita derivatives. Consequently, $\iors \circ \LX = \LX \circ \iors$ for all Killing vector fields $X$.
\end{corollary}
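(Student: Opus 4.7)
The plan is to deduce the homothety claim from Lemma~\ref{isocomm} and then obtain the Killing vector field claim by differentiating at $t=0$ along the flow. For the homothety part, Lemma~\ref{isocomm} reduces the assertion to verifying $\tilde\nabla = \mu^*\nabla$, where $\tilde\nabla$ and $\nabla$ denote the Levi-Civita derivatives of $g$ on $M$ and $h$ on $N$. By definition of a homothety one has $\mu^*h = c^2 g$ for some $c>0$. Combining naturality of the Koszul formula under pullback along diffeomorphisms ($\mu^*\nabla^h = \nabla^{\mu^*h}$) with invariance of the Koszul equation under positive constant rescaling of the metric ($\nabla^{c^2 g} = \nabla^g$) yields $\mu^*\nabla = \nabla^{\mu^*h} = \nabla^{c^2 g} = \nabla^g = \tilde\nabla$, as required.

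For the Killing vector field assertion, observe that $\LX g = 0$ implies that the (local) flow $\Fl^X_t$ of $X$ consists of isometries. Since $\iors$ is a presheaf morphism, we may restrict to a relatively compact open subset on which $\Fl^X_t$ is defined for $|t|$ small and treat $X$ as complete there. The first part then gives, for every such $t$ and every $T \in \DprsM$,
\[
D_t T \coleq \bigl(\iors\circ(\Fl^X_t)^* - (\Fl^X_t)^*\circ\iors\bigr)(T) \in \hnrs(M).
\]
Linearity of $\iors$ combined with the flow formula $\LX R = \tfrac{\ud}{\ud t}\big|_{t=0}(\Fl^X_t)^*R$ from Section~\ref{sec_pullback} yields $\iors(\LX T) - \LX(\iors T) = \tfrac{\ud}{\ud t}\big|_{t=0}D_t T$, so it remains to show that this derivative is negligible.

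The principal obstacle is that differentiation in $t$ does not manifestly preserve negligibility: what is needed is that the $O(\e^m)$ estimates furnished by Lemma~\ref{isocomm} for each fixed $t$ hold \emph{uniformly} in $t$ on a neighborhood of $0$. A careful reading of the proof of Lemma~\ref{isocomm}, where the relevant identities between the transport operators $\tilde A$, $A$ and $\mu = \Fl^X_t$ depend smoothly on $t$, should yield this uniformity. Once uniformity is established, a Taylor expansion $D_t T = t\cdot\tfrac{\ud}{\ud t}|_{t=0}D_t T + O(t^2)$ combined with an $\e$-dependent choice of $t$ (e.g.\ $t = \e^{1/2}$) shows that $\tfrac{\ud}{\ud t}|_{t=0}D_t T = O(\e^{m-1})$, which still gives negligibility since $m$ was arbitrary. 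All Lie-derivative tests in Definition~\ref{basedef} are handled in the same way, using that $\LX$ commutes with the family of pullbacks $(\Fl^X_t)^*$.
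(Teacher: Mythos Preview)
Your treatment of the homothety claim is correct and matches the paper's one-line argument: homotheties preserve the Levi-Civita connection, so Lemma~\ref{isocomm} applies directly.

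For the Killing part, the strategy of differentiating the family $D_tT$ at $t=0$ is natural, but your Taylor argument has a genuine gap. From a uniform bound $\lvert D_tT(\Phi(\e,p))(p)\rvert \le C\e^m$ alone you cannot control the $t$-derivative: in the expansion $D_tT = t\,\partial_t|_{0}D_tT + \tfrac{t^2}{2}\partial_t^2|_{\xi}D_tT$ the remainder carries an $\e$-dependent constant (a priori only moderate, say $O(\e^{-N})$), so plugging in $t=\e^{1/2}$ leaves a term of order $\e^{1-N}$, which is not negligible. One could try to repair this by establishing moderateness of the second $t$-derivative uniformly and then choosing $t=\e^M$ with $M$ large, but you have not done so, and it is more work than necessary.

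The cleaner route, which is implicit in the paper's ``Consequently'' and in the proof of Lemma~\ref{isocomm}, is to observe that on the test set the difference is not merely $O(\e^m)$ but \emph{identically zero}. Indeed, for $p$ in a compact $K$ contained in a convex $U_0$, for $\e$ small enough that $\supp\Phi(\e,p)\subseteq U_0$, and for $\lvert t\rvert$ small enough that $\Fl^X_t$ keeps the relevant pairs near the diagonal, the identity $(\Fl^X_t,\Fl^X_t)^*A = A$ holds on $U_0\times U_0$ because an isometry preserves geodesics and parallel transport. Hence $D_tT(\Phi(\e,p))(p)=0$ for all such $(t,\e,p)$, and differentiating in $t$ gives zero; the Lie-derivative tests in Definition~\ref{basedef} follow for the same reason. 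Equivalently, one may use the explicit commutator formula \eqref{nullzwei} derived later in the paper and note that $\Lie_{X\times X}A$ vanishes near the diagonal when $X$ is Killing.
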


\begin{remark}The (non-trivial) proof that $\iors(t)$ is smooth is to a large extent identical to the corresponding result in \cite[Section 7]{global2}, the necessary modifications being straightforward (we simply have one slot less to deal with).
\end{remark}

We will now show that the embedding $\iors$ has the properties required for an embedding of distributions into Colombeau algebras, namely it has moderate values, for smooth tensor fields it reproduces $\sirs$, and it is injective.

\begin{proposition}The embeddings have the following properties.
\begin{enumerate}[(i)]
\item $\iors(\DprsM) \subseteq \hersmM$.
\item $(\iors-\sirs)(\cTrsM) \subseteq \NhrsM$.
\item For $v \in \DprsM$, $\iors(v) \in \NhrsM$ implies $v=0$.
\end{enumerate}
\end{proposition}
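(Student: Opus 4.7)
All three claims will be reduced to scalar statements via Proposition \ref{isos}: a generalized tensor field $R \in \hersM$ lies in $\hersmM$ (resp.\ $\hnrs(M)$) if and only if the scalar $R \cdot v \in \heM$ lies in $\hemM$ (resp.\ $\hat\cN(M)$) for every $v \in \cTsrM$. Fixing such a $v$ and invoking Lemma \ref{tricklein}, I will restrict attention to a compact set $K$ inside a chart $(U,\varphi)$ and to kernels $\Phi$ supported near the diagonal of $U \times U$, where $A$ is genuine parallel transport. The basic identity
\begin{align*}
((\iors t)(\omega) \cdot v)(p) = \langle t, A^s_r(p,\cdot) v(p) \otimes \omega \rangle = \langle T_p, \omega \rangle
\end{align*}
identifies $T_p \coleq t \cdot (A^s_r(p,\cdot) v(p)) \in \DpU$ as a scalar distribution depending smoothly on $p$; in local coordinates this is a finite sum $\sum_\lambda \langle t^\lambda, b_\lambda(p,\cdot) f_\omega \rangle$ with smooth coefficients $b_\lambda$ defined on a neighborhood of the diagonal.

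\textbf{(i) and (ii).} Moderateness of $\iors t$ will then follow by applying Lie derivatives in $p$ to $((\iors t)(\Phi(\e,p))\cdot v)(p)$ and distributing them across the smooth factors $b_\lambda$ (bounded on $K$) and across $\Phi(\e,p)$ (producing the $(\Liep + \Lie)$-combinations that appear in the growth estimate of the smoothing-kernel definition); the resulting $O(\e^{-N})$-bound is obtained exactly as in the scalar moderateness argument of \cite{global}. For (ii), the starting point is the rewriting
\begin{align*}
((\iors t - \sirs t)(\Phi(\e,p))\cdot v)(p) = \int_M \psi_{p,v}(q)\,\Phi(\e,p)(q),
\end{align*}
where $\psi_{p,v}(q) \coleq t(q)\cdot A^s_r(p,q) v(p) - t(p)\cdot v(p)$ and I have used $\int \Phi(\e,p) = 1$. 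Since $\psi_{p,v}$ is jointly smooth near the diagonal and vanishes at $q = p$ (because $A(p,p) = \id$), the moment property defining $\sk kM$ yields $O(\e^{k+1})$ uniformly in $p \in K$. Lie derivatives in $p$ produce analogous integrals involving either a $p$-derivative of $\psi_{p,v}$ (still jointly smooth) or $\Liep_X\Phi$ (which has integral $0$); both are controlled by the moment property together with the smoothing-kernel growth estimate, giving $O(\e^m)$ when $k$ is chosen large enough, in direct analogy with the scalar negligibility proof of \cite{global}.

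\textbf{(iii) Injectivity.} For any test section $v\otimes \omega_0 \in \cTsrM \otimes \ocM$, interchanging the $p$-integration with the distributional action of $t$ (justified by joint smoothness in $p$ and uniformly compact supports for small $\e$) yields
\begin{align*}
\int_M ((\iors t)(\Phi(\e,p))\cdot v)(p)\,\omega_0(p) = \langle t, \eta_\e \rangle,
\end{align*}
where $\eta_\e(q) \coleq \int_M A^s_r(p,q) v(p)\,\Phi(\e,p)(q)\,\omega_0(p) \in \Gamma_c(\TsrM \otimes \Lambda^n \tang^* M)$. Negligibility of $\iors(t)$ applied with $l = 0$ and $m \ge 1$, together with the compact support of $\omega_0$, forces the left-hand side to $0$ as $\e \to 0$. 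On the other hand, since $\Phi(\e,p)(q)$ is supported in $p \in B_{\e C}^g(q)$ for small $\e$ and $A(q,q) = \id$, the family $\eta_\e$ will converge to $v \otimes \omega_0$ in the (LF)-topology of $\Gamma_c(\TsrM \otimes \Lambda^n \tang^* M)$. Continuity of $t$ then gives $\langle t, v \otimes \omega_0 \rangle = 0$ for all test sections, hence $t = 0$.

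\textbf{Main obstacle.} Parts (i) and (ii) are essentially bookkeeping: absorb $A^s_r(p,\cdot) v(p)$ as an extra smooth factor and invoke the scalar estimates of \cite{global}; the one new point is verifying that uniformity in $p$ survives when the test family itself depends on $p$. The genuine difficulty lies in (iii): the convergence $\eta_\e \to v \otimes \omega_0$ is a \emph{dual} smoothing assertion — integration over the center variable $p$ with the evaluation point $q$ held fixed — which is not directly encoded in the definition of $\sk 0M$. Making it precise will require combining the support condition $\supp \Phi(\e,p) \subseteq B_{\e C}^g(p)$, the normalization $\int \Phi(\e,p) = 1$, the identity $A(q,q) = \id$, and the smoothness of $A$, $v$, $\omega_0$ to get pointwise convergence, and then promoting it to (LF)-convergence using the uniform compactness of supports.
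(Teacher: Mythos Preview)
Your proposal is correct and follows essentially the same route as the paper: reduction to scalars via Proposition~\ref{isos}, local chart arguments together with Lemma~\ref{tricklein}, and for (iii) the ``dual smoothing'' convergence $\eta_\e \to v \otimes \omega_0$. The only organizational difference is that the paper isolates this last point as a separate result (Proposition~\ref{regapprox}, weak convergence of the regularization $T_\e$ to $T$) and then deduces injectivity in Corollary~\ref{injektivitaet} by the trivial bound $\bigl|\int_M T_\e \cdot u\,\omega\bigr| \le \sup_{p \in \supp\omega}|T_\e(p)\cdot u(p)|\cdot\bigl|\int_M\omega\bigr|$, which is $O(\e^m)$ by negligibility; the convergence of $\eta_\e$ in the test-section topology is established there in local coordinates by citing \cite[Corollary 5.3]{gfproc}, confirming that the obstacle you flagged is genuine but standard.
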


\begin{proof}
(i) For testing we fix $K \csub M$ and $l \in \bN_0$. For any vector fields $X_1,\dotsc,X_l \in \fX(M)$ and a smoothing kernel $\Phi \in \sk 0M$ by Proposition \ref{isos} we need to calculate $\Lie_{X_1} \dotsc \Lie_{X_l}(p \mapsto \langle t, A^s_r(p, \cdot)u(p) \otimes \Phi(\e,p)\rangle )$ on $K$ for arbitrary $u \in \cTrsM$.  By the chain rule (for a detailed argument on why $t$ commutes with the Lie derivative see the proof of \cite[Proposition 6.8]{global2}) this is given by terms of the form
\begin{equation}\label{modeins}
\langle t, v(p, \cdot)\otimes \Liep_{Y_1} \dotsc \Liep_{Y_k} \Phi(\e,p) \rangle
\end{equation}
for some $Y_i \in \fX(M)$ ($i=1\dotsc k \in \bN$) and $v \in \Gamma(\pr_2^*(\TsrM))$; the latter consists of Lie derivatives of $u$ transported by Lie derivatives of $A$. By the definition of smoothing kernels, for $\e$ small enough and $p$ in a relatively compact neighborhood of $K$ the support of $\Phi(\e,p)$ for $p \in K$ lies in a (bigger) relatively compact neighborhood $L$ of $K$. Because $t$ is continuous and linear and $\cTsrM \otimes \ocM$ carries the usual inductive limit topology (as in \cite[Section 2]{global2}), the modulus of \eqref{modeins} can be estimated by a finite sum of seminorms of $\Gamma_{c,L}(\TsrM \otimes \Lambda^n\tang^*M)$ applied to the argument of $t$ in \eqref{modeins}. These seminorms are given by $s \mapsto \sup_{x \in L}\norm{\Lie_{Z_1}\dotsc \Lie_{Z_p}s(x)}$ for some vector fields $Z_j \in \fX(M)$, $j=1,\dotsc,p \in \bN$ (the norm is with respect to any Riemannian metric on $M$). It thus remains to estimate $\norm{\Lie_{Z_1} \dotsc \Lie_{Z_p}(v(p,\cdot) \otimes \Liep_{Y_1} \dotsc \Liep_{Y_k}\Phi(\e,p))}$. This in turn reduces to an estimate of $\Lie$- and $\Liep$-derivatives of $\Phi$, which immediately gives the desired moderateness estimate by definition of the space of smoothing kernels.

(ii) In order to show the claim we have to verify (using Proposition \ref{isos}) that for arbitrary $u \in \cTsrM$, $K \csub M$ and $m \in \bN_0$ there is some $k \in \bN$ such that for all $\Phi \in \sk kM$ we have the estimate
\begin{equation}\label{einss}
\sup_{p \in K}\abso{\int_M (t \cdot (A^s_r(p, \cdot)u(p)))(q) \Phi(\e, p)(q)\,\ud q - (t\cdot u)(p)} = O(\e^m).
\end{equation}
By Lemma \ref{tricklein} we may assume that $K$ is contained in the domain of a chart $(U, \varphi)$ and  $\Phi \in \sk kU$. Defining $f \in \Cinf(U \times U)$ by $f(p,q)\coleq t(q) \cdot A^s_r(p, q)u(p)$ we can write \eqref{einss} as
$\sup_{p \in K}\abso{\int_U \bigl(f(p,q) - f(p,p)\bigr)\Phi(\e,p)(q)\,\ud q}$.
Setting $\tilde f \coleq f \circ (\varphi^{-1} \times \varphi^{-1})$ and $x \coleq \varphi(p)$ the integral is given by 
$\int_{\varphi(U)} (\tilde f(x,y) - \tilde f(x,x))\tilde\phi(\e,x)(y)\,\ud y$
where $\tilde \phi \in \lsk k{\varphi(U)}$ is the local expression of $\Phi$. It is easily verified that this is $O(\e^{k+1})$ uniformly for $x \in \varphi(K)$, so for $k+1\ge m$ the required estimates are satisfied.

(iii) is shown in Corollary \ref{injektivitaet} below.

\end{proof}

Although we will not treat association in full detail the following is a first step in this direction (cf.~\cite[Section 9]{global2} for the type of results that can be obtained). Let
\[
\rho\colon \cTrsM \to \DprsM,\quad \rho(t)(u \otimes \omega) \coleq \int (t \cdot u)\,\omega
\]
be the embedding of $\cTrsM$ into $\DprsM$. Given a tensor distribution $T$ in $\DprsM$ and a smoothing kernel $\Phi \in \sk 0M$ we set $T_\e \coleq [p \mapsto (\iors T)(\Phi(\e,p))(p)]$ which is an element of $\cTrsM$.
$T_\e$ can be seen as a regularization of $T$ which gets more accurate for smaller $\e$. More precisely, we will now show that $\rho(T_\e)$ converges to $T$ weakly in $\DprsM$ for $\e \to 0$.

Fix $u \otimes \omega \in \cTsrM \otimes_\CinfM \ocM$. We may assume that $\omega$ (and thus $u$) has support in a fixed compact set $K$ contained in a chart $(U, \varphi)$: using partitions of unity we can write $u \otimes \omega = \sum_i \chi_i u \otimes \chi_i \omega$ where the $\chi_i$ are smooth functions on $M$ with $\supp \chi_i \subseteq U_i$. Then
$\langle \rho(T_\e) - T, u \otimes \omega \rangle = \sum_i \langle \rho(T_\e) - T, \chi_i u \otimes \chi_i \omega \rangle$
converges to $0$ if the result holds for the case where $K$ is contained in a chart $(U, \varphi)$.

We abbreviate $\tilde u^{j_1 \dotsc j_s}_{i_1 \dotsc i_r}(p,q) \coleq (A^s_r(p,q)u(p))^{j_1\dotsc j_s}_{i_1 \dotsc i_r}$
and note that $u^{j_1\dotsc j_s}_{i_1 \dotsc i_r}(p) = \tilde u^{j_1 \dotsc j_s}_{i_1 \dotsc i_r}(p,p)$.
Given any neighborhood $L$ of $K$ which is relatively compact in $U$ there is as in the proof of Lemma \ref{tricklein} some $\e_0>0$ and a smoothing kernel $\Phi_1 \in \sk 0U$ such that for all $p \in L$ and $\e < \e_0$ the support of $\Phi(\e,p)$ is contained in $U$ and $\Phi(\e,p)|_U = \Phi_1(\e,p)$. Let $\Phi_1$ have local expression $\tilde \phi$. Let $\psi \in \ccD(\varphi(U))$ be determined by $\varphi_*\omega = \psi\, \ud x^1 \wedge \dotsc \wedge \ud x^n$. Then for $\e < \e_0$ (denoting the local expressions of $T^{i_1 \dotsc i_r}_{j_1 \dotsc j_s}$ and $\tilde u^{i_1 \dotsc i_s}_{j_1 \dotsc j_r}$ by the same letter, respectively)
\begin{align*}
 \langle \rho(T_\e), u \otimes \omega \rangle & = \int_M \langle T(q), A^s_r(p,q)u(p) \otimes \Phi(\e,p)(q)\rangle\, \omega(p) \\
&= \int_M \langle T^{i_1 \dotsc i_r}_{j_1 \dotsc j_s}(q), (A^s_r(p,q)u(p))^{j_1\dotsc j_s}_{i_1 \dotsc i_r} \cdot \Phi_1(\e,p)(q) \rangle\,\omega(p) \\
&= \int_{\varphi(U)} \langle T^{i_1 \dotsc i_r}_{j_1 \dotsc j_s}(y), \tilde u^{j_1\dotsc j_s}_{i_1 \dotsc i_r}(x,y)  \cdot \tilde\phi(\e,x)(y) \rangle \psi(x)\, \ud^n x\\
&= \int_{\varphi(U)} \langle T^{i_1 \dotsc i_r}_{j_1 \dotsc j_s}(y), \tilde u^{j_1\dotsc j_s}_{i_1 \dotsc i_r}(x,y)  \cdot \psi(x) \cdot  \tilde\phi(\e,x)(y)\, \ud^n x\\
&= \langle T^{i_1 \dotsc i_r}_{j_1 \dotsc j_s}(y), \int_{\varphi(U)} \tilde u^{j_1\dotsc j_s}_{i_1 \dotsc i_r}(x,y)  \cdot \psi(x) \cdot  \tilde\phi(\e,x)(y)\,\ud^n x \rangle \\
\intertext{and}
\langle T, u \otimes \omega \rangle &= \langle T^{i_1 \dotsc i_r}_{j_1 \dotsc j_s}(p) , u^{j_1 \dotsc j_s}_{i_1 \dotsc i_r}(p) \cdot \omega(p) \rangle = \langle T^{i_1 \dotsc i_r}_{j_1 \dotsc j_s}(y) , u^{j_1 \dotsc j_s}_{i_1 \dotsc i_r}(y) \cdot \psi(y) \rangle.
\end{align*}
Integration here commutes with the distributional action, as can be seen from writing the above as the tensor product of the distribution $T^{i_1 \dotsc i_r}_{j_1 \dotsc j_s}$ with the distribution $1$.
Now for each choice of $j_1,\dotsc,j_s,i_1,\dotsc,i_r$ we abbreviate $f(x,y) \coleq \tilde u^{j_1\dotsc j_s}_{i_1 \dotsc i_r}(x,y)  \cdot \psi(x)$ and note that $f(y,y) = u^{j_1\dotsc j_s}_{i_1\dotsc i_r}(y) \cdot \psi(y)$.
Because the function mapping $y$ to $\int_{\varphi(U)} f(x,y)\tilde \phi(\e,x)(y)\,\ud x - f(y,y)$ has support in a compact set in $\varphi(U)$, for each component of $T_\e - T$ by \cite[Proposition 21.1]{Treves} there exist $m>0$ and $C>0$ such that
\[ \langle (T_\e - T)^{i_1\dotsc i_r}_{j_1\dotsc j_s}, u^{j_1\dotsc j_s}_{i_1\dotsc i_r} \cdot \omega \rangle \le \sup_{\substack{\abso{\alpha}\le m \\ y \in \varphi(U)}} \norm{\pd^\alpha(\int_{\varphi(U)} \!\!\!\!\!\!\!f(x,y) \tilde \phi(\e,x)(y)\, \ud x - f(y,y))} \]
which is $O(\e)$ by \cite[Corollary 5.3]{gfproc} or the proof of \cite[Proposition 9.10]{global2}.
Summarizing, we have shown:

\begin{proposition}\label{regapprox}Given $T \in \DprsM$ and $\Phi \in \sk0M$ the regular distribution given by $p \mapsto (\iors T)(\Phi(\e,p))(p)$
converges weakly to $T$ in $\DprsM$ for $\e \to 0$.
\end{proposition}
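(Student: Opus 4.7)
The plan is to reduce the claim to a local coordinate computation inside a single chart and then exploit the continuity of the distribution together with a standard mollification estimate that is already available in the scalar theory. Since weak convergence in $\DprsM$ means convergence of $\langle \rho(T_\e), u \otimes \omega\rangle \to \langle T, u\otimes \omega\rangle$ for every test element $u \otimes \omega \in \cTsrM \otimes_\CinfM \ocM$, and since the pairing is bilinear, I would first use a partition of unity subordinate to a chart cover to reduce to the case that $\omega$ (and hence the pair $u\otimes\omega$) has support in a compact set $K$ contained in a single chart $(U,\varphi)$.

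Working in this chart, I would write out $\langle \rho(T_\e), u\otimes\omega\rangle$ using the definition of $\iors$: unfolding the inner integration of $\Phi(\e,p)$ against $T$ and then the outer integration against $\omega$, and using the support properties of the smoothing kernel (as in the trick of Lemma \ref{tricklein}) to replace $\Phi$ by some $\Phi_1 \in \sk 0 U$ whose local expression $\tilde\phi$ we can manipulate. In local components this yields, for each multi-index combination $I,J$, a pairing of the scalar distribution $T^{I}_{J}$ against a smooth function of $y$, namely
\[
y \;\mapsto\; \int_{\varphi(U)} f(x,y)\,\tilde\phi(\e,x)(y)\,\ud^n x, \qquad f(x,y) \coleq \tilde u^{J}_{I}(x,y)\,\psi(x),
\]
with $\psi$ the coefficient of $\omega$ and $\tilde u^{J}_{I}(x,y)$ the components of $A^s_r(x,y)u(x)$. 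On the other hand, $\langle T, u\otimes\omega\rangle$ unfolds to the pairing of the same $T^{I}_{J}$ against $y \mapsto f(y,y) = u^J_I(y)\psi(y)$, because $A(y,y)=\id$ by Lemma \ref{transpop_abl}(i). Commuting the $x$-integration past the distributional action in $y$ is justified by interpreting the expression as the action of the tensor product $T^I_J \otimes 1$ on the smooth, compactly supported two-variable integrand.

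Thus it suffices to show, component by component, that the function
\[
g_\e(y) \coleq \int_{\varphi(U)} f(x,y)\,\tilde\phi(\e,x)(y)\,\ud^n x \;-\; f(y,y)
\]
tends to zero in the test-function topology on $\ccD(\varphi(U))$. Since $g_\e$ has support in a fixed compact set, the standard order estimate for distributions (\cite[Proposition 21.1]{Treves}) bounds the pairing $\langle T^I_J, g_\e\rangle$ by a constant times $\sup_{|\alpha|\le m}\sup_y \abso{\pd^\alpha g_\e(y)}$ for some finite $m$. The fact that this supremum is $O(\e)$ (in fact $O(\e^{k+1})$ for $\Phi \in \sk k U$) is exactly the local smoothing-kernel estimate already used in the proof of property (ii) of the embedding and recorded in \cite[Corollary 5.3]{gfproc} and \cite[Proposition 9.10]{global2}. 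Summing over finitely many indices $I,J$ and the finitely many pieces of the partition of unity then gives the convergence.

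I expect the main obstacle to be bookkeeping rather than a deep idea: carefully extracting the local representation so that the difference cleanly reduces to the scalar mollification error $g_\e$, and ensuring the swap of $x$-integration with the distribution in $y$ is rigorously justified (either by the tensor-product-of-distributions argument, or by directly verifying continuity of the $y$-dependent integrand as a curve in $\ccD(\varphi(U))$). Once this is in place, the analytical content is entirely inherited from the scalar case of \cite{global}.
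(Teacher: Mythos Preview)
Your proposal is correct and follows essentially the same approach as the paper: partition of unity to localize to a chart, replacement of $\Phi$ by a kernel on $U$ via Lemma~\ref{tricklein}, componentwise reduction to the scalar function $f(x,y)=\tilde u^J_I(x,y)\psi(x)$ with $f(y,y)=u^J_I(y)\psi(y)$, swapping the $x$-integration with the distributional action via the tensor product $T^I_J\otimes 1$, and then bounding $\langle T^I_J,g_\e\rangle$ by \cite[Proposition 21.1]{Treves} together with the $O(\e)$ mollification estimate from \cite[Corollary 5.3]{gfproc} or \cite[Proposition 9.10]{global2}. The paper's proof is exactly this.
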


\begin{corollary}\label{injektivitaet}For $T \in \DprsM$, $\iors(T) \in \NhrsM$ implies $T=0$.
\end{corollary}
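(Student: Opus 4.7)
The plan is to combine the regularization result in Proposition \ref{regapprox} with the definition of negligibility for $(r,s)=(0,0)$-evaluations applied with $l=0$. The intuition: negligibility forces the smooth regularizations $p \mapsto (\iors T)(\Phi(\e,p))(p)$ to vanish in the limit (in fact in $C^\infty$), while Proposition \ref{regapprox} asserts that these very regularizations converge weakly to $T$. Uniqueness of weak limits then yields $T=0$.

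Concretely, I would proceed as follows. Fix any compact set $K \csub M$. Applied to $(r,s)$-tensors with $l=0$ and $m=1$, the negligibility hypothesis gives some $k\in\bN$ with the property that for every smoothing kernel $\Phi \in \sk kM$ one has
\[
\sup_{p\in K}\norm{(\iors T)(\Phi(\e,p))(p)}_g = O(\e).
\]
Such $\Phi$ exist (smoothing kernels of arbitrary order are constructed in the scalar theory; see \cite{global}). Fix one such $\Phi$ and set $T_\e(p) \coleq (\iors T)(\Phi(\e,p))(p)$, which is a smooth tensor field in $\cTrsM$.

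The estimate above, exhausting $M$ by compact sets, shows that $T_\e \to 0$ uniformly on compact subsets of $M$. In particular, for any test section $u \otimes \omega \in \cTsrM \otimes_\CinfM \ocM$, we have
\[
\langle \rho(T_\e), u \otimes \omega \rangle = \int_M (T_\e \cdot u)\, \omega \longrightarrow 0 \qquad (\e \to 0),
\]
since $u$ and $\omega$ are bounded on the compact support of $\omega$. Hence $\rho(T_\e) \to 0$ weakly in $\DprsM$. On the other hand, Proposition \ref{regapprox} applied to this same $\Phi$ tells us that $\rho(T_\e) \to T$ weakly in $\DprsM$. By uniqueness of weak limits, $T = 0$.

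The only subtlety is to ensure that $\sk kM$ is nonempty for every $k$, so that the negligibility test can actually be applied and the same kernel can be fed into Proposition \ref{regapprox}; this is standard and inherited from the scalar construction of \cite{global} (where smoothing kernels of every order are explicitly built via cut-offs of rescaled mollifiers and partitions of unity). No further delicate analysis is needed—the heavy lifting has already been done in Proposition \ref{regapprox}.
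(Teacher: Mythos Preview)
Your proof is correct and follows essentially the same approach as the paper: both combine Proposition \ref{regapprox} (weak convergence $\rho(T_\e)\to T$) with the negligibility estimate on $\supp\omega$ to force $\langle T,u\otimes\omega\rangle=0$. The remark about ``exhausting $M$ by compact sets'' is slightly imprecise since the $k$ (and hence the chosen $\Phi$) may depend on $K$, but this is harmless because you only ever need the estimate on the single compact set $K=\supp\omega$ for the test section at hand.
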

\begin{proof}For suitable $k \in \bN$, $u \otimes \omega \in \cTsrM \otimes_\CinfM \ocM$, and $\Phi \in \sk kM$
\begin{align*}
\abso{\langle T, u \otimes \omega \rangle} & = \abso{\lim_{\e \to 0} \langle (\iors T)(\Phi(\e,p))(p), (u \otimes \omega)(p) \rangle} \\
&= \abso{\lim_{\e \to 0} \int_M \langle T(q), A^s_r(p,q)u(p) \otimes \Phi(\e,p)(q) \rangle\, \omega (p)} \\
&\le \lim_{\e \to 0} \sup_{p \in \supp \omega} \abso{ \langle T(q), A^s_r(p,q)u(p) \otimes \Phi(\e,p)(q) \rangle } \cdot \abso{\int_M \!\!\omega(p)}
\end{align*}
which is $O(\e^m)$ because of negligibility of $T$.
\end{proof}

\section{Commutation relations}\label{sec_commut}

\begin{proposition}The operations $\mu^*$ and $\LX$ on $\hersM$ extend the usual pullback and Lie derivative of smooth tensor fields: $\mu^* \circ \sirs = \sirs \circ \mu^*$ and $\LX \circ \sirs = \sirs \circ \Lie_X$.
\end{proposition}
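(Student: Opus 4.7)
The claim is that the two extensions $\mu^*$ and $\LX$ on the full basic space reduce to the ordinary pullback and Lie derivative when restricted (via $\sirs$) to smooth tensor fields. My plan is to verify each identity by unravelling the definitions of $\sirs$, $\mu^*$ on $\hersM$ (Definition \ref{defpb}), and $\LX$ on $\hersM$, exploiting crucially the fact that $\sirs(t)$ is a \emph{constant} map $\ub0M \to \cTrsM$ with value $t$, so its $\omega$-derivative vanishes identically.

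For the pullback identity, I would fix $t \in \cTrsN$ and an arbitrary $\omega \in \ub0M$ and simply compute
\[ (\mu^*\sirs(t))(\omega) = \mu^*\bigl(\sirs(t)(\mu_*\omega)\bigr) = \mu^*(t) = \sirs(\mu^*t)(\omega), \]
where the first equality is Definition \ref{defpb}, the second uses that $\sirs(t)$ is constant in $\omega$, and the third is the definition of $\sirs$ applied to $\mu^*t$. This holds for every $\omega$, so the two elements of $\hersM$ agree.

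For the Lie derivative identity, fix $t \in \cTrsM$ and $\omega \in \ub0M$. By definition,
\[ (\LX \sirs(t))(\omega) = -\ud\,\sirs(t)(\omega)(\LX\omega) + \LX\bigl(\sirs(t)(\omega)\bigr). \]
Since $\sirs(t)\colon \ub0M \to \cTrsM$ is the constant map with value $t$, its differential $\ud\,\sirs(t)(\omega) \in \Lin(\Gamma_c(\Lambda^n \tang^*M), \cTrsM)$ is zero for every $\omega$, so the first term drops out. The second term equals $\LX t$, which is also $\sirs(\LX t)(\omega)$.

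There is no real obstacle here: the only thing one needs is that $\sirs(t)$ has vanishing differential because it is constant, and that $\mu^* \circ \id = \mu^*$. Both identities therefore follow immediately from the definitions, with no appeal to moderateness or smoothing-kernel properties.
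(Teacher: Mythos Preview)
Your proof is correct and matches the paper's own argument essentially line for line: both compute $(\mu^*\sirs(t))(\omega)$ and $(\LX\sirs(t))(\omega)$ directly from the definitions, using that $\sirs(t)$ is constant in $\omega$ so its differential vanishes. There is nothing to add.
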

\begin{proof}For $t \in \cTrsN$ and $\omega \in \ub0M$ we have
\begin{align*}
\mu^*(\sirs(t))(\omega) &= \mu^*(\sirs(t)(\mu_*\omega)) = \mu^* t = \sirs(\mu^* t)(\omega) \\
\intertext{and for $t \in \cTrsM$, $X \in \fX(M)$, and $\omega \in \ub0M$}
\Lie_X(\sirs(t))(\omega) &= -\ud (\sirs(t))(\omega)(\Lie_X\omega) + \Lie_X(\sirs(t)(\omega)) = \Lie_Xt \\
&= \sirs(\Lie_X t)(\omega).\qedhere
\end{align*}
\end{proof}

In Corollary \ref{isokill} we already saw that the embedding of distributional tensor fields commutes with pullback along homotheties and consequently with Lie derivatives along Killing vector fields.

Lemma \ref{isocomm} allows to reformulate the question of whether pullback along an arbitrary (orientation preserving) diffeomorphism $\mu\colon M \to N$ commutes with $\iors$, for if one endows $M$ with the pullback metric $\mu^*h$ this question reduces to checking whether the embeddings $(\iota^g)^r_s$ and $(\iota^{\mu^*h})^r_s$ arising from the Riemannian metrics $g$ and $\mu^*h$ are equal. We then have the following main result.

\begin{theorem}\label{nogo}Let $\nabla$ and $\tilde\nabla$ be covariant derivatives on $M$ with corresponding embeddings $\iota$ and $\tilde \iota$, respectively. Then
\begin{enumerate}
\item[(i)] $(\iota^r_s - \tilde\iota^r_s)(\DprsM) \subseteq \NhrsM$ implies $\nabla = \tilde\nabla$.
\item[(ii)] $\iota$ does not commute with arbitrary Lie derivatives.
\end{enumerate}
\end{theorem}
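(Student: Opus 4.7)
The plan is to reduce both parts to a local computation in a chart and exploit Lemma~\ref{transpop_abl}, which encodes the Christoffel symbols of $\nabla$ in the first-order jet at the diagonal of any transport operator $A$ associated to $\nabla$.

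For (i), I would write $B \coleq A - \tilde A$ and observe that for every $T \in \DprsM$, $\Phi \in \sk 0M$, dual tensor $v \in \cTsrM$ and point $p$,
\[ ((\iota - \tilde\iota) T)(\Phi(\e,p)) \cdot v(p) = \langle T, B^s_r(p, \cdot)\, v(p) \otimes \Phi(\e, p)\rangle. \]
Lemma~\ref{transpop_abl}(i) gives $B(x,x) = 0$, while (ii) gives that the first $q$-derivative of $B$ at $(x,x)$ is determined by $\Gamma(x) - \tilde\Gamma(x)$. Working in a chart about a fixed point $x_0$, I would test the negligibility hypothesis with distributions of the form $\pd^\alpha \delta_{x_0} \otimes t_0$ paired with smoothing kernels scaling appropriately about $x_0$, so that the $\e \to 0$ asymptotics extract the first-order jet of $B$ at $(x_0, x_0)$. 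Negligibility forces this leading term, which depends linearly on $\Gamma(x_0) - \tilde\Gamma(x_0)$ and on the chosen test data, to vanish. Varying the test data then yields $\Gamma(x_0) = \tilde\Gamma(x_0)$, and since $x_0$ is arbitrary, $\nabla = \tilde\nabla$. The main technical hurdle is selecting the test/kernel pair so that the leading $\e$-term cleanly captures the linear jet of $B$ without being trivialised or swamped by higher-order debris.

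For (ii) I argue by contradiction. Suppose $\iota\circ\LX - \LX\circ\iota$ has values in $\NhrsM$ for every $X\in \fX(M)$. Since Lie derivatives arise as derivatives of flow pullbacks at $t=0$, iterating the commutator one expects $(\Fl^X_t)^*\circ\iota - \iota\circ(\Fl^X_t)^*$ to also take values in $\NhrsM$ for small $t$; Lemma~\ref{isocomm} would then force $(\Fl^X_t)^*\nabla = \nabla$ and hence $\LX\nabla = 0$ for every $X$, which fails for instance for $X = (x^1)^2 \pd_1$ on $\bR^n$ with the standard flat connection (where $\LX\nabla \neq 0$ already on the level of Christoffel symbols). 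To sidestep the delicate exponentiation of derivations in a quotient algebra, I would carry out the contradiction directly for such an $X$: after unwinding the definitions of $\iota$ and $\LX$ on $\hersM$ and applying the chain rule, the difference
\[ (\LX\iota T - \iota\LX T)(\omega)\cdot v(p) \]
reduces modulo obviously negligible contributions to the pairing of $T$ against a term involving $\Lie_X A$ (the operator from~\eqref{aprilbeta}) in the $p$-slot; by Lemma~\ref{transpop_abl}(ii), $\Lie_X A$ encodes combinations of $\LX\Gamma$ which are non-zero precisely when $\LX\nabla\neq 0$. A concrete tensorised Dirac distribution $T$ together with a smoothing kernel detecting the diagonal jet of $\Lie_X A$ then produces a non-negligible value. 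The bookkeeping needed to isolate the $\Lie_X A$-term (cf.\ the analogous calculations in \cite[Section~6]{global2}) is the main obstacle.
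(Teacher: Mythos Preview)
Your overall architecture matches the paper's: both parts reduce to showing that if $(\omega,p)\mapsto\langle T, Z(p,\cdot)\otimes\omega\rangle$ is negligible for all $T$, where $Z$ is $(A-\tilde A)v$ in case (i) and $(\Lie_{X\times X}A)v$ in case (ii), then the first $q$-jet of $Z$ at the diagonal must vanish, and one then reads off the Christoffel data from Lemma~\ref{transpop_abl}.

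Where you differ is in the test distribution. The paper proves a general extraction lemma (Proposition~\ref{trick1} and Corollary~\ref{obenabl}) using the principal value $\sign t\cdot |t|^{n-2}$ tensored with deltas, together with a carefully chosen radial mollifier with prescribed moments; this is designed to isolate the first-order term even when $Z$ does not vanish on the diagonal. You instead use $\pd_i\delta_{x_0}$ directly. Since in both applications $Z(p,p)=0$, your choice works and is cleaner: for a kernel with local profile $\e^{-n}\varphi((y-x)/\e)$ and $\varphi(0)\neq 0$ one gets exactly $-\e^{-n}\varphi(0)\,(\pd_{q_i}Z)(x_0,x_0)$, and negligibility forces the jet to vanish. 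So for (i) your argument is a legitimate simplification of the paper's.

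For (ii) your identification of the key term as $\Lie_{X\times X}A$ (equation~\eqref{nullzwei} in the paper) is correct, and so is the conclusion: a direct computation shows that the first $q$-jet of $(\Lie_{X\times X}A)(p,q)$ at $q=p$ equals $-(\Lie_X\nabla)$, whose vanishing for all $X$ is absurd because of the Hessian contribution $\pd_i\pd_jX^k$. One correction: Lemma~\ref{transpop_abl}(ii) alone does not suffice for this jet. Writing $a_X(x,y)=-X'(y)a(x,y)+a'(x,y)(X(x),X(y))+a(x,y)X'(x)$ and differentiating in $y$ at $y=x$, the middle term produces $a''(x,x)((X,X),(0,\cdot))$, so you need part~(iii) of Lemma~\ref{transpop_abl}. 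The paper carries this out as a longer coordinate calculation culminating in the contradiction $X''YZ=0$; your packaging via $\Lie_X\nabla$ is more conceptual but rests on the same second-derivative formula. Finally, you were right to abandon the flow-exponentiation idea: passing from $[\Lie_X,\iota]\subseteq\hnrs$ to $(\Fl^X_t)^*\iota-\iota(\Fl^X_t)^*\subseteq\hnrs$ would require integrating a derivation on the quotient, which is not available here.
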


The proof consists of several steps. First, the assumptions are written as conditions having the same form, namely negligibility of the generalized function $(\omega, p) \mapsto \langle T, Z(p,\cdot) \otimes \omega \rangle \in \cE(M)$ for all $T \in \DprsM$ and some $Z \in \Gamma(\pr_2^*(\TsrM))$. Then, choosing $T$ appropriately we obtain that derivatives of $Z$ in the second slot vanish. Finally, the derivatives of $Z$ are calculated explicitly. This involves the derivatives of the transport operator, which are related to the connection as seen in Lemma \ref{transpop_abl}.

Beginning with the first step, we show that both $\iota-\tilde\iota$ and $\iota \circ \Lie_X - \LX \circ \iota$ give rise to expressions of the same form. In the first case, the equality $\iota = \tilde \iota$ in the quotient means that for all $T \in \DprsM$ the generalized function $R \coleq (\iota - \tilde \iota)T \in \hersmM$ given by
\begin{equation}\label{nulleins}
(R(\omega) \cdot v)(p) = \langle T, (A(p,\cdot) - \tilde A(p, \cdot))v(p) \otimes \omega \rangle
\end{equation}
for $v \in \cTsrM$ and $\omega \in \ocM$ is negligible, where $A$ resp.\ $\tilde A$ are transport operators associated to $\nabla$ resp.\ $\tilde \nabla$. Note that the difference $(p,q) \mapsto (A(p, q) - \tilde A(p, q))v(p)$ is an element of $\Gamma(\pr_2^*(\TsrM))$ and vanishes on the diagonal in $M \times M$.

In the second case, from the proof of \cite[Proposition 6.8]{global2} (in particular, equations (6.13) and (6.14) therein) we immediately obtain the identity
\begin{equation}\label{nullzwei}
((\iota \circ \Lie_X - \LX \circ \iota)(T) (\omega) \cdot v)(p) = \langle T, (\Lie_{X \times X}A)(p, \cdot)v(p) \otimes \omega \rangle
\end{equation}
where the term on the right hand side is exactly the additional term of the Lie derivative of generalized tensor fields in \cite{global2} which makes it commute with the embedding already in the basic space there. As in our case pullback of generalized tensor fields cannot act on the transport operator this term does not cancel. Note that also $(p,q) \mapsto (\Lie_{X \times X}A)(p,\cdot)v(p)$ is an element of $\Gamma(\pr_2^*(\TsrM))$ and vanishes on the diagonal.

Thus in both cases (i) and (ii) for each $v \in \cTsrM$ we have found some $Z \in \Gamma(\pr_2^*(\TsrM))$ such that for all $T \in \DprsM$ the generalized function $R \cdot v \in \hemM$ defined by
\begin{equation}\label{erdnuss}
\omega \mapsto [ p \mapsto \langle T, Z(p, \cdot) \otimes \omega \rangle ]
\end{equation}
is negligible (i.e., an element of $\hat\cN(M)$). The next proposition and the subsequent corollary allow us to get information about $Z$ by the right choices of the distribution $T$.

The idea behind the following proof is the following: locally negligibility of \eqref{erdnuss} means that an expression like $\langle T, f(x, \cdot)\tang_x\scale_\e \varphi \rangle$ converges to $0$. As a simple case consider $n=1$, $x=0$ and $f$ depending on the second slot only with $f(0)=0$.  Then $\langle T, f \cdot S_\e\varphi\rangle \to 0$ one the one hand, but on the other hand we can write this as (neglecting the remainder of the Taylor expansion, which vanishes asymptotically):
\[ \langle T(y), (f(0) + f'(0)\cdot y + \dotsc + f^{(k)}(0)\cdot y^k/k!)S_\e\varphi \rangle \to 0 \]
As the support of $S_\e \varphi$ gets arbitrarily small we can only hope to get information about $f$ at $0$. It vanishes there, but we can determine its derivatives by taking for $T$ the principal value of $1/y$: this gives the terms
\[ f(0) \cdot \langle 1/y, S_\e\varphi \rangle,\quad f'(0) \cdot \langle 1, S_\e\varphi\rangle,\quad \dotsc \quad f^{(k)}(0) \langle y^{k-1}/k!, S_\e\varphi\rangle. \]
If $\varphi$ now has vanishing moments of order $k-1$ and is even the only remaining term is $f'(0)$ so we can conclude $f'(0)=0$.

In the general case the proof is more involved. In what follows $\cE'(\Omega) \subseteq \ccD(\Omega)$ is the space of compactly supported distributions on $\Omega$ and $\cE(\Omega)$, $\cE_M(\Omega)$, and $\cN(\Omega)$ are the basic space of $\Gd(\Omega)$ in C-formalism \cite{found} and its subspaces of moderate resp.\ negligible elements.

\begin{proposition}\label{trick1}Let $\Omega \subseteq \bR^n$ be open and $f \in \Cinf(\Omega \times \Omega)$. Then
\begin{enumerate}
\item[(i)]For each $T \in \ccD'(\Omega)$ the mapping in $\cE(\Omega)$ given by
\begin{equation}\label{endeapril2}
(\varphi, x) \mapsto \langle T, f(x, \cdot) \varphi(.-x) \rangle
\end{equation}
is moderate, i.e., an element of $\cE_M(\Omega)$.
\item[(ii)] If for all compactly supported distributions $T \in \cE'(\Omega)$ the mapping \eqref{endeapril2} is in $\cN(\Omega)$ then all first order partial derivatives in the second slot of $f$ vanish on the diagonal, i.e., $\pd_i(y \mapsto f(x,y))|_x=0$ $\forall x \in \Omega$ $\forall i=1\dotsc n$.
\end{enumerate}
\end{proposition}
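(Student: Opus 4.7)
The first claim is a smoothness and moderateness statement. Smoothness of $(\varphi,x) \mapsto \langle T, f(x,\cdot)\varphi(\cdot-x)\rangle$ in both slots follows from continuity of $T$, smoothness of $f$, and smoothness of translation in $x$; linearity in $\varphi$ is trivial. For moderateness I would substitute a scaled test object of the form $\e^{-n}\psi(\cdot/\e)$ for $\varphi$ and note that $\e^{-n}\psi((\cdot-x)/\e)$ together with its derivatives in $x$ up to order $l$ satisfies pointwise bounds $O(\e^{-n-l})$ and has support in an $\e$-neighbourhood of $x$. Using the locally finite order of $T$ on compact sets one bounds the expression by a finite sum of sup-seminorms of derivatives of $f(x,\cdot)\varphi(\cdot-x)$; since $f$ contributes only bounded factors on compact sets this yields the required $O(\e^{-N})$ estimate.

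Part (ii) I plan to prove by contraposition. Assume $\pd_j(y \mapsto f(x_0,y))|_{y=x_0}\neq 0$ for some $x_0 \in \Omega$ and some index $j$. The idea is to construct $T\in\cE'(\Omega)$ together with a scaled test object such that the value of the mapping \eqref{endeapril2} at $x=x_0$ converges to a nonzero constant as $\e \to 0$, contradicting the assumed negligibility. The natural choice is the compactly supported principal value distribution $T = \chi(y)\,\vp\,(y_j - x_0^j)^{-1}$, where $\chi\in\ccD(\Omega)$ is a cutoff equal to $1$ on a neighbourhood of $x_0$.

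For $\e$ small enough, the translated test function $\e^{-n}\psi((\cdot-x_0)/\e)$ is supported on $\{\chi\equiv 1\}$, and after the substitution $y = x_0+\e u$ the evaluation of \eqref{endeapril2} at $x=x_0$ becomes
\[
\vp\int \frac{1}{\e u_j}\, f(x_0, x_0+\e u)\, \psi(u)\,\ud u.
\]
Taylor expanding $f$ in its second slot produces an $\e^{-1}$-singular term with coefficient $f(x_0,x_0)$, a finite $\e^0$-term collecting the first derivatives $\pd_{y_k}f(x_0,x_0)$, and a remainder carrying positive powers of $\e$. Choosing $\psi$ symmetric under $u_j\mapsto -u_j$ kills the singular term by parity of $1/u_j$; likewise the integrals $\int (u_k/u_j)\psi(u)\,\ud u$ vanish by parity for $k\neq j$ and reduce to $\int\psi = 1$ for $k=j$, so the finite part is precisely $\pd_{y_j}f(x_0,x_0)$. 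Imposing enough vanishing higher moments on $\psi$ forces the remainder to be $O(\e)$, and negligibility at $x_0$ then yields $\pd_{y_j}f(x_0,x_0)=0$, contrary to assumption.

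The main obstacle is a careful treatment of the Taylor remainder in the presence of the principal value: to extract a genuine $O(\e)$ bound one has to isolate the singular dependence on $u_j$ via an identity of the form $h(u)=h(u)|_{u_j=0}+u_j\int_0^1\pd_j h(\ldots)\,\ud t$ so that the principal value becomes an honest integral, and then use enough vanishing moments of $\psi$ to absorb the resulting polynomial factors. One further has to verify that a $\psi$ which is simultaneously symmetric under $u_j\mapsto -u_j$ and satisfies the required moment conditions in the C-formalism of $\Gd(\Omega)$ actually exists, which is standard. The argument must then be run for each index $j$ separately, using the corresponding principal value in each coordinate direction.
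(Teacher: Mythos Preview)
Your proposal is correct and close in spirit to the paper's proof, but the test distribution you pick is different, and this changes where the work lies.

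For (ii) the paper does not use $\chi\cdot\vp\,(y_j-x_0^j)^{-1}$. Instead it takes the tensor product
\[
P=\delta\otimes\dotsb\otimes\delta\otimes\chi(t)\,\sign t\,\abso{t}^{n-2}\otimes\delta\otimes\dotsb\otimes\delta
\]
with the nontrivial factor in the $k$-th slot, translated to $x$, and pairs it with a \emph{radial} test function $\varphi=\varphi_1\circ\norm{\cdot}^n$ built from a one-variable $\varphi_1$ with prescribed moments. The delta factors collapse the whole computation to the single line through $x$ in direction $e_k$: one only has to Taylor-expand the one-variable function $\tilde f(t)=f(x,x+te_k)$, and the $\abso{t}^{n-2}$ weight is tuned so that after the substitution $t=\e s^{1/n}$ the moment conditions on $\varphi_1$ kill every term except $\tilde f'(0)=\pd_k f(x,\cdot)|_x$. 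No multi-dimensional principal value ever appears.

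Your route keeps the full $n$-dimensional integral and relies on parity of $\psi$ in $u_j$ together with the FTC-in-$u_j$ decomposition to tame the $1/u_j$ singularity in the remainder. That works, and in fact once you write $f(x_0,x_0+\e u)=f(x_0,x_0+\e\hat u)+\e u_j\int_0^1\pd_{y_j}f(\dotsc)\,\ud s$ (with $\hat u=u-u_je_j$) the first piece vanishes by parity and the second converges to $\pd_{y_j}f(x_0,x_0)$ directly; the higher-moment conditions on $\psi$ are only needed to place $\psi$ in the correct $\mathcal A_q$, not for the estimate itself. The trade-off is that the paper's one-dimensional reduction sidesteps the PV-remainder issue you flag as the ``main obstacle'', at the cost of a slightly exotic-looking test distribution, while your argument is more transparent about the distribution but has to confront that obstacle head-on.
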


\begin{proof}
(i) resembles the statement that the embedding of distributions into $\cE(\Omega)$ has moderate values; the proof is virtually the same (see \cite[Theorem 7.4 (i)]{found}), inserting $f(x, \cdot)$ at the appropriate places. This results in an application of the chain rule and the appearance of some extra constants (suprema of derivatives of $f$ on compact sets), but leaves moderateness intact.

 (ii) Let $x$ be an arbitrary  point of $\Omega \subseteq \bR^n$. Choose some $\eta>0$ with $\eta < \dist(x, \pd\Omega)$ and a smooth bump function $\chi \in \ccD(\bR)$ with $\chi = 1$ on $\overline{B_{\eta/2}}(0)$ and $\supp \chi \subseteq B_\eta(0)$.

Consider the distribution $t \mapsto \sign t \cdot \abso{t}^{n-2}$. For $n>1$ this is a locally integrable function, for $n=1$ this means the principal value of $\frac{1}{t}$. This distributions thus is given for all $n \in \bN$ by
\begin{equation}\label{pv}
\langle \sign t \cdot \abso{t}^{n-2}, \omega \rangle = \lim_{\delta \to 0} \int_\delta^\infty t^{n-2} (\omega(t) - \omega(-t))\, \ud t\qquad \forall \omega \in \ccD(\bR).
 \end{equation}

We introduce the distribution
\[ P \coleq \delta \otimes \dotsc \otimes \delta \otimes \chi(t) \sign t \cdot \abso{t}^{n-2} \otimes \delta \otimes \dotsc \otimes \delta \in \ccD'(\bR^n)\]
or more explicitly
\[ \langle P, \omega \rangle = \langle \sign t \cdot \abso{t}^{n-2}, \chi(t) \omega(0,\dotsc,t,\dotsc,0)\rangle\qquad \forall \omega \in \ccD(\bR^n) \]
where $\chi(t) \sign t \cdot \abso{t}^{n-2}$ resp. $t$ appears at the $k$th position for an arbitrary $k \in \{1,\dotsc,n\}$ which shall be fixed from now on.

$u \coleq \trans_xP = P(.-x)$ then is a compactly supported distribution on $\Omega$: because $\supp P \subseteq \{0\} \times \dotsc \times B_\eta(0) \times \dotsc \times \{0\} \subseteq B_\eta(0)$ we have $\supp u \subseteq B_\eta(x) \subseteq \Omega$.

With $K = \{ x \}$ and arbitrary $m \in \bN$, by negligibility of \eqref{endeapril2} there is some $q \in \bN$ (which can be chosen arbitrarily high) such that for any fixed $\varphi \in \lub q{\bR^n}$ we have
\begin{equation} \label{heissermittwoch}
\langle u, f(x, \cdot)\trans_x\scale_\e\varphi \rangle = O(\e^m) \qquad (\e \to 0).
\end{equation}

Choose $\varphi_1 \in \ccD([0, \infty))$ which is constant in a neighborhood of $0$ and satisfies
\[ \int_0^\infty s^{j/n} \varphi_1(s)\, \ud s = \left\{ \begin{aligned}
\frac{n}{\omega_n} &\qquad j= 0\\
0 &\qquad j=1,2,3,\dotsc,q
\end{aligned} \right.
\]
where $\omega_n$ is the area of the $(n-1)$-dimensional sphere in $\bR^n$.
Such a function exists by a straightforward adaption of the proof of \cite[Proposition 1.4.30]{GKOS}, and we set $\varphi \coleq \varphi_1 \circ \norm{\ }^n \in \ccD(\bR^n)$.
Then $\varphi$ is in $\lub q{\bR^n}$, as a simple calculation shows.

Choosing $r>0$ such that $\supp \varphi \subseteq B_r(0)$, let $\e < \eta/(2r)$ from now on, which implies $\supp \trans_x\scale_\e\varphi \subseteq B_{\eta/2}(x) \subseteq \Omega$ and $\supp [t \mapsto \varphi_1(t^n/\e^n)] \subseteq B_{\eta/2}(0)$. By equation \eqref{pv} the expression $\langle u, f(x,\cdot)\trans_x\scale_\e \rangle$ on the left-hand side of \eqref{heissermittwoch} is given by
 \begin{equation}\label{junizwei}
 \langle P, f(x,x+.)S_\e\varphi\rangle = \lim_{\delta \to 0} \int_\delta^{\eta/2} \chi(t) t^{n-2} ( \tilde f(t) - \tilde f(-t)) \e^{-n} \varphi_1((t/\e)^n)\,\ud t
 \end{equation}
and we can write \eqref{junizwei} as the limit for $\delta \to 0$ of
 \begin{multline*}
\int_\delta^{\eta/2} \sum_{l=0}^{q} t^{n-2} \frac{\tilde f^{(l)}(0)}{l!} (t^l - (-t)^l)\e^{-n}\varphi_1(t^n/\e^n)\,\ud t + \\
\int_\delta^{\eta/2} t^{n-2} \int_0^1 \frac{(1-v)^{q}}{q!} \bigl( \tilde f^{(q+1)}(vt) - (-1)^{q+1}\tilde f^{(q+1)}(-vt)\bigr)\,\ud v\, \cdot\\
 \ t^{q+1} \e^{-n} \varphi_1(t^n/\e^n)\, \ud t.
 \end{multline*}
The terms for even $l$ and odd $l\le 3$ 
vanish and the term for $l=1$ gives exactly $2\tilde f'(0)/\omega_n$. Finally, after substituting $t = \e s^{1/n}$ the remainder term is
given by
\begin{multline*}
\frac{\e^{q}}{n} \int_0^{(\eta/(2\e))^n} \int_0^1 \frac{(1-v)^{q}}{q!} \bigl(\tilde f^{(q+1)}(\e v s^{1/n}) - (-1)^{q+1}\tilde f^{(q+1)}(-\e v s^{1/n})\bigr)\, \cdot \\
s^{q/n} \varphi_1(s)\, \ud v\, \ud s
\end{multline*}
and the integral is bounded by a finite constant independently of $\e$. 
Concluding, from Taylor expansion on the one hand and the assumption on the other hand we have
 \begin{align*}
 \langle u, f(x,\cdot)\trans_x\scale_\e\varphi \rangle &= 2 \tilde f'(0) / \omega_n  + O(\e^q) \textrm{ and} \\
 \langle u, f(x,\cdot)\trans_x\scale_\e\varphi \rangle &= O(\e^m).
 \end{align*}
Together, this gives $\tilde f'(0) = O(\e^{\min(q, m)})$ where $m$ and $q$ can be chosen arbitrarily high.
Thus $\tilde f'(0) = D_2f(x,x)\cdot e_k = 0$, which concludes the proof because $x$ and $k$ were arbitrary.
\end{proof}

Now follows the corresponding result on a manifold.

\begin{corollary}\label{obenabl}Let $Z \in \Gamma(\pr_2^*(\TsrM))$ satisfy $Z(p,p) = 0$ $\forall p \in M$. Then
\begin{enumerate}
\item[(i)] For each $T \in \DprsM$ the mapping from $\ub0M \times M$ into $\bR$ defined by
\begin{equation}\label{endeapril}
(\omega, p) \mapsto \langle T, Z(p, \cdot) \otimes \omega \rangle
\end{equation}
is moderate, i.e., an element of $\hemM$.
\item[(ii)] If for \emph{all} $T \in \DprsM$ the mapping \eqref{endeapril} is negligible then $\Lie_Y(Z(p, \cdot))(p)$ vanishes for all $Y \in \fX(M)$ and $p \in M$.
\end{enumerate}
\end{corollary}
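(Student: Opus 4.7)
For part (i), the plan is to reduce to Proposition \ref{trick1}(i) by localization. Fix $T \in \DprsM$ and a compact set $K \csub M$; by Lemma \ref{tricklein} we may assume $K$ lies in the domain of a chart $(U,\varphi)$ and test only against $\Phi \in \sk 0U$. Writing $T$ and $Z$ in the coordinate frame yields
\[
\langle T, Z(p,\cdot)\otimes \Phi(\e,p)\rangle = \sum_{\lambda,\mu}\langle T^\mu_\lambda(y), Z^\lambda_\mu(x,y)\,\tilde\phi(\e,x)(y)\rangle,
\]
with $x = \varphi(p)$ and $\tilde\phi$ the local expression of $\Phi$. Each summand falls under Proposition \ref{trick1}(i) with $f = Z^\lambda_\mu$; Lie derivatives in $p$ are handled by the chain rule as in the proof of that proposition, and the required moderateness estimates follow.

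For part (ii), fix $p_0 \in M$, a chart $(U,\varphi)$ around $p_0$ with $x_0 = \varphi(p_0)$, and an arbitrary $Y \in \fX(M)$. Because $Z(p_0,p_0)=0$, a routine local computation shows that the components of $\Lie_Y(Z(p_0,\cdot))(p_0)$ in the coordinate frame reduce to
\[
Y^k(x_0)\,\pd_{q^k}Z^\lambda_\mu(x_0,x_0),
\]
the usual correction terms multiplying $Z^\lambda_\mu(x_0,x_0)$ dropping out. Since $Y(p_0)$ can be made to be any tangent vector at $p_0$, it suffices to prove $\pd_{q^k}Z^\lambda_\mu(x_0,x_0)=0$ for all multiindices $\lambda,\mu$ and all $k \in \{1,\dotsc,n\}$.

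To isolate one component I would, for any $S \in \cE'(\varphi(U))$ and any chosen indices $\lambda,\mu$, construct $T \in \DprsM$ with support in $U$ whose only non-zero coordinate in the chart frame is $T^\mu_\lambda = S$ (multiplication by a cutoff function on $M$ makes $T$ globally well-defined). Evaluating the hypothesis on this $T$ and on $\omega$ with density $\psi \in \ccD(\varphi(U))$, and translating the smoothing-kernel test on $M$ into a C-formalism test on $\varphi(U)$, the assumed negligibility becomes the negligibility in $\cN(\varphi(U))$ of
\[
(\vartheta,x)\mapsto \langle S, Z^\lambda_\mu(x,\cdot)\,\vartheta(\cdot - x)\rangle.
\]
Proposition \ref{trick1}(ii) applied with $f = Z^\lambda_\mu$ then delivers $\pd_{q^k}Z^\lambda_\mu(x_0,x_0)=0$ for all $k$, which combined with the formula above yields $\Lie_Y(Z(p_0,\cdot))(p_0)=0$ and finishes the proof.

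The main obstacle is the matching between the two formalisms in the last step: translating the manifold-level negligibility tested against all $\Phi \in \sk kM$ into the local C-formalism negligibility tested against all translate-scales $\trans_x\scale_\e\vartheta$. The standard way to handle this is to realize each C-formalism test object, possibly after a cutoff-type modification as used in Lemma \ref{tricklein}, as the local expression of a genuine smoothing kernel on $M$; once this compatibility is available the reduction to Proposition \ref{trick1}(ii) is mechanical.
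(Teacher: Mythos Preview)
Your proposal is correct and follows essentially the same approach as the paper: both parts proceed by localization to a chart, construction of a tensor distribution with a single prescribed coordinate component, and reduction to Proposition~\ref{trick1}. The paper is even terser on part (i), simply referencing the moderateness argument for embedded distributions from \cite{global}, and it likewise glosses over the formalism-matching step you flag, implicitly relying on the equivalence between the smoothing-kernel and C-formalism pictures established in \cite{found,global}.
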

\begin{proof}
As in Proposition \ref{trick1}, (i) follows in the same way as moderateness of embedded distributions (see \cite[Section 5]{global}).

(ii) Let $(U, \psi)$ be a chart on $M$ and $\{b_\lambda\}_\lambda$ a basis of $\cTrsU$ with dual basis $\{b^\lambda\}_\lambda$ of $\cTsrU$. Denote the coordinates of $Z$ on $U$ by $Z_\lambda \in \Cinf(U \times U)$, i.e., $Z(p,q) = Z_\lambda(p,q)b^\lambda(q)$ for all $p,q \in U$.

We will show that for any compactly supported distribution $t_U \in \cE'(\psi(U))$ the mapping 
defined by $(\varphi, x) \mapsto \langle t_U, Z_\lambda(\psi^{-1}(x), \cdot)\varphi(.-x) \rangle$
is an element of $\CdCn{\psi(U)}$, i.e., negligible in the local diffeomorphism invariant scalar algebra in the C-setting. For this purpose define $S \in \DprsU \cong \cTrsU \otimes_{\CinfM} \DpU$ by $S \coleq b_\lambda \otimes t$ (where $t \in \DpU$ corresponds to $t_U$ as in Section \ref{sec_preliminaries}), which has compact support and thus a trivial extension to a distributional tensor field $T \in \DprsM$ with $T|_U = S$. By assumption the map $\ub0M \times M \to \bR$ given by $(\omega,p) \mapsto \langle T, Z(p, \cdot) \otimes \omega \rangle$
is negligible, thus also its restriction to $U$ which is the map $\ub0U \times U \to \bR$ given by
$(\omega,p ) \mapsto \langle T, Z(p,\cdot)\otimes \omega \rangle = \langle T|_U, Z(p,\cdot)|_U \otimes \omega \rangle = \langle t, Z_\lambda(p,\cdot)\omega \rangle$. 
This implies that the corresponding map $\lub 0{\psi(U}) \times \psi(U) \to \bR$ given by
\begin{align*}
(\varphi, x) &\mapsto \langle t, Z_\lambda(\psi^{-1}(x), \cdot)\psi^*(\varphi(.-x) \, \ud y^1 \wedge \dotsc \wedge \ud y^n) \rangle \\
&= \langle t, \psi^*(Z_\lambda(\psi^{-1}(x), \psi^{-1}(\cdot))\varphi(.-x) \, \ud y^1 \wedge \dotsc \wedge \ud y^n) \rangle \\
&= \langle t_U, (Z_\lambda \circ (\psi^{-1} \times \psi^{-1}))(x, \cdot) \varphi(.-x) \rangle
\end{align*}
is in $\CdCn{\psi(U)}$ for any choice of $t_U \in \cE'(\psi(U))$. Proposition \ref{trick1} now implies that $\pd_i (y \mapsto Z_\lambda(\psi^{-1}(x), \psi^{-1}(y)))|_x = 0$ for all $x$ in $\psi(U)$ and all $i$. Noting that $Z(p,p)=0$ by assumption, the local formula for $\Lie_Y(Z(p,\cdot))(p)$ evaluates to $0$.\end{proof}

Returning to the proof of Theorem \ref{nogo} and assuming \eqref{nulleins} resp.~\eqref{nullzwei} to be negligible for all choices of $T$, Corollary \ref{obenabl} implies in the case $(r,s)=(0,1)$ for all $X$, $Y$, $Z \in \fX(M)$ and $p \in M$ the identities
\begin{align*}
\text{(i) }\Lie_Y(q \mapsto (A(p,q) - B(p,q))Z(p) & = 0 \quad\textrm{and} \\
\text{(ii) } \Lie_Y(q \mapsto (\Lie_{X \times X}A)(p,q)Z(p))(p) & = 0.
\end{align*}

Given a vector field $X \in \fX(M)$, by its \textit{local flow} on $U$ we mean the map $\alpha\colon \flowdom(X) \to \varphi(U)$ determined by the ODE
\begin{equation}\label{flowODE}
\alpha(0, x) = x,\quad \dot \alpha(t,x)=X(\alpha(t,x))
\end{equation}
where $X \in \Cinf(\varphi(U), \bR^n)$ is the local representation of $X$ on $U$ and $\flowdom(X)$, the maximal domain of definition of $\alpha$, is an open subset of $\bR \times \varphi(U)$. For $p \in U$, its flow along $X$ is given by $\Fl^X_t p = \tang\varphi^{-1} (\alpha(t, \varphi(p)))$ for all $t$ with $(t, \varphi(p)) \in \flowdom(X)$. Furthermore, $\alpha$ is smooth. By differentiating \eqref{flowODE} one sees that for all $(t,x) \in \flowdom(X)$ the local flow $\alpha$ satisfies 
\begin{equation}\label{anfangswerte}
\begin{gathered}
\alpha(0,x) = x,\quad
\alpha'(0,x) = \id\\
\alpha''(0,x) = 0,\quad
\dot\alpha'(0,x)=X'(x).
\end{gathered}
\end{equation}

For (i), $\Lie_X(q \mapsto A(p, q)Z(p))(p)$ is given by the derivative at $t=0$ of
$\tang \Fl^X_{-t} A(p, \Fl^X_t p) Z(p)$.
This means we have to differentiate the local expression $D\alpha(-t, \alpha(t, x)) a(x, \alpha(t,x)) Z(x)$
which results in
\begin{multline*}
-\dot\alpha'(-t, \alpha(t,x)) a(x, \alpha(t,x)) Z(x)\\
+ \alpha''(-t, \alpha(t,x))X(\alpha(t,x))a(x, \alpha(t,x))Z(x)\\
+ \alpha'(-t, \alpha(t,x))a'(x, \alpha(t,x))(0,X(\alpha(t,x)))Z(x)
\end{multline*}
which by \eqref{anfangswerte} and Lemma \ref{transpop_abl} evaluates to
$-X'(x)Z(x) - \Gamma(x)(X(x), Z(x)) = -\nabla_ZX(x)$ at $t=0$.
As we can choose $X$, $Z$, and $x$ freely this immediately implies that both covariant derivatives are equal, which proves Theorem \ref{nogo} (i).

Now to (ii). 
By equation \eqref{aprilbeta} $\Lie_Y(q \mapsto \Lie_{X \times X}A(p,q)Z(p))(p)$ is given by
\begin{equation}\label{alpha3}
\left.\frac{\ud}{\ud s}\right|_{s=0} T\Fl^Y_{-s} \left.\frac{\ud}{\ud t}\right|_{t=0} \tang_{\Fl^X_tq}\Fl^X_{-t} \cdot A(\Fl^X_t p, \Fl^X_t \Fl^Y_s p) \cdot \tang_p \Fl^X_t \cdot V(p).
\end{equation}
We will first calculate the inner expression, which (setting $q \coleq \Fl^Y_s p$) is given by
\begin{equation}\label{alpha2}
\tang_{\Fl^X_tq}\Fl^X_{-t} \cdot A(\Fl^X_t p, \Fl^X_t q) \cdot \tang_p \Fl^X_t \cdot V(p).
\end{equation}
Note that for $p,q \in U$ and the modulus of $s,t$ small enough the flows in \eqref{alpha2} and \eqref{alpha3} stay inside $U$, thus we have for \eqref{alpha2} the local expression
\[
F(t,x,y) \coleq \alpha'(-t, \alpha(t,y)) a(\alpha(t,x), \alpha(t,y))\cdot \alpha'(t,x)Z(x).
\]
Here $\alpha$ (and below $\beta$) denotes the local flow of $X$ (and $Y$, respectively). The derivative w.r.t.\ $t$ of this is
\begin{equation*}
\begin{split}
\dot F&(t,x,y) = \bigl( -\dot\alpha'(-t, \alpha(t,y))a(\alpha(t,x),\alpha(t,y))\alpha'(t,x)\\
& + \alpha''(-t,\alpha(t,y))X(\alpha(t,y))a(\alpha(t,x),\alpha(t,y))\alpha'(t,x) \\
& +\alpha'(-t, \alpha(t,y))a'(\alpha(t,x),\alpha(t,y))\bigl(X(\alpha(t,x)), X(\alpha(t,y))\bigr)\alpha'(t,x)\\
& +\alpha'(-t,\alpha(t,y))a(\alpha(t,x),\alpha(t,y))\dot\alpha'(t,x)\bigr) Z(x).
\end{split}
\end{equation*}
Evaluating at $t=0$ we obtain by \eqref{anfangswerte} that $F'(0,x,y)$ equals
\begin{align*}
\bigl(-X'(y)a(x,y) + a'(x,y)(X(x), X(y)) + a(x,y)X'(x)\bigr)Z(x).
\end{align*}
Note that for $x=y$ this expression vanishes by Lemma \ref{transpop_abl}. Now we set $y=\beta(t,x)$; then \eqref{alpha3} is locally given by the derivative at $s=0$ of
\begin{multline*}
G(s,x) \coleq \beta'(-s, \beta(s,x))\bigl(-X'(\beta(s,x))a(x,\beta(s,x))\\
+ a'(x,\beta(s,x))(X(x), X(\beta(s,x)) + a(x,\beta(s,x))X'(x)\bigr)Z(x).
\end{multline*}
The derivative of $G$ is
\begin{align*}
\dot G(s,x) = & -\dot\beta'(-s,\beta(s,x))F'(0,x,\beta(s,x)) +\\
& + \beta''(-s,\beta(s,x))Y(\beta(s,x))F'(0,x,\beta(s,x)) \\
& + \beta'(-s,\beta(s,x)) \bigl( -X''(\beta(s,x))Y(\beta(s,x))a(x,\beta(s,x)) \\
&\qquad-X'(\beta(s,x))a'(x,\beta(s,x))(0,Y(\beta(s,x)))\\
&\qquad + a''(x,\beta(s,x))\bigl((X(x), X(\beta(s,x))),(0, Y(\beta(s,x)))\bigr)\\
&\qquad + a'(x,\beta(s,x))(0,X'(\beta(s,x))Y(\beta(s,x)))\\
&\qquad+ a'(x,\beta(s,x))(0,Y(\beta(s,x)))X'(x)\bigr) Z(x)
\end{align*}
and at $s=0$ the first two terms vanish, while for the rest we obtain (omitting $x$ notationally)
\[
\dot G(0,x) = (-X''Y - X'a'(0,Y) + a''((X, Y), (0, Y)) + a'(0,X'Y) + a'(0,Y)X')Z
\]
which by Lemma \ref{transpop_abl} equals
\begin{multline}
 -X''YZ + X' \Gamma(Y, Z) - 1/2 \bigl( \Gamma' \cdot (X + Y)(Y, Z) + (\Gamma' \cdot Y)(Y - X, Z) \\
- \Gamma(Y - X, \Gamma(Y, Z)) - \Gamma(Y, \Gamma(Y - X, Z))\bigr) - \Gamma(X'Y, Z) - \Gamma(Y, X'Z) \\
 = -X''YZ + X' \Gamma(Y, Z) - (\Gamma' \cdot Y)(Y, Z) + \Gamma(Y, \Gamma(Y, Z))\\
 - 1/2 \bigl( (\Gamma' \cdot X)(Y, Z) - (\Gamma' \cdot Y)(X, Z)
+ \Gamma(X, \Gamma(Y, Z)) + \Gamma(Y, \Gamma(X, Z))\bigr)\\
 - \Gamma(X'Y, Z) - \Gamma(Y, X'Z). \label{endeee}
\end{multline}
By assumption, this vanishes for all possible choices of $X$, $Y$, $Z$, and $x$. Setting $X=0$ gives $(\Gamma' \cdot Y)(Y, Z) = \Gamma(Y, \Gamma(Y, Z))$
and, applying this formula to $\Gamma' \cdot (X+Y)(X+Y, Z)$ for any $X,Y,Z$ we obtain
\begin{equation*}
(\Gamma' \cdot X)(Y,Z) + (\Gamma' \cdot Y)(X,Z) = \Gamma(X, \Gamma(Y,Z)) + \Gamma(Y, \Gamma(X,Z))
\end{equation*}
and thus, inserting this into \eqref{endeee}
\begin{equation*}
 -X'' Y Z + X' \Gamma(Y,Z) - (\Gamma' \cdot X)(Y,Z) - \Gamma(X'Y, Z) - \Gamma(Y, X'Z) = 0
\end{equation*}
for all choices of $X,Y,Z,x$. In particular, choosing $X$ constant in a neighborhood of $x$ gives $(\Gamma'\cdot X)(Y, Z)=0$, thus $\Gamma'=0$ and we can drop this term. Then, choosing $X$ such that $X'=\id$ around $x$ implies $\Gamma(Y,Z)=0$. It remains that $X''YZ=0$, which clearly cannot hold for arbitrary $X,Y,Z$. This proves the assertion that $\iors$ cannot commute with arbitrary Lie derivatives.

We thus have established Theorem \ref{nogo}.

\subsection*{Acknowledgments}

This research has been supported by START-project Y237 and project P20525 of
the Austrian Science Fund and the Doctoral College 'Differential Geometry and Lie
Groups' of the University of Vienna.

\end{document}